\documentclass{birkjour}
\usepackage{mathrsfs}

\newtheorem{theorem}{Theorem}[section]
\newtheorem{proposition}[theorem]{Proposition}
\newtheorem{lemma}[theorem]{Lemma}
\newtheorem{corollary}[theorem]{Corollary}

\theoremstyle{definition}
\newtheorem{definition}[theorem]{Definition}
\newtheorem{remark}[theorem]{Remark}
\newtheorem{example}[theorem]{Example}

\numberwithin{equation}{section} 


\newcommand{\Rot}{\operatorname{\mathbf{curl}}}
\newcommand{\Div}{\operatorname{\mathrm{div}}}

\newcommand{\rot}{\operatorname{\mathrm{curl}}}

\newcommand{\loc}{\mathrm{loc}}
\renewcommand{\Im}{\operatorname{Im}}

\newcommand  {\C}{{\mathbb C}}
\newcommand  {\N}{{\mathbb N}}

\newcommand  {\R}{{\mathbb R}}

\newcommand {\Id}{\mathrm {I}}

\newcommand  {\D}{\operatorname{ D}}
\newcommand  {\Hess}{\operatorname {Hess}}

\newcommand  {\TT}{\boldsymbol{\mathsf T}}

\newcommand  {\HH}{\boldsymbol{\mathsf H}}

\newcommand {\Tr}{\operatorname{Trace}} 
\newcommand {\Jac}{\operatorname{Jac}} 
\newcommand {\cof}{\operatorname{cof}}

\newcommand  {\nn}{\boldsymbol{ n}}

\newcommand  {\uu}{\boldsymbol{ u}}
\newcommand  {\jj}{\boldsymbol{\mathsf j}}

\newcommand  {\vv}{\boldsymbol{ v}}

\newcommand{\transposee}[1]{{\vphantom{#1}}{#1}^{\mathsf T}}  
\usepackage{color}
\newcommand{\Bk}{\color{black}}
\newcommand{\Rd}{\color{red}}
\let\Rd\Bk 

\begin{document}
\hyphenation{ho-mo-ge-ne-ous}

\title[Shape derivatives I: Pseudo-homogeneous kernels]{Shape derivatives  of boundary integral operators in electromagnetic  scattering. Part I:  Shape differentiability of  pseudo-homo\-ge\-ne\-ous boundary integral operators}
\author{Martin Costabel}
  \address{IRMAR, Institut Math\'ematique, Universit\'e de Rennes 1, 35042
    Rennes, France} \email{martin.costabel@univ-rennes1.fr}
\author{Fr\'ed\'erique Le Lou\"er}
 \address{ Institut f\"ur Numerische und Andgewandte Mathematik, Universit\"at G\"ottingen, 37083
   G\"ottingen, Germany} \email{f.lelouer@math.uni-goettingen.de }

\begin{abstract}   In this paper we study the shape differentiability properties of a class of  boundary integral operators and of potentials with weakly singular pseudo-homogeneous kernels acting between classical Sobolev spaces,  with respect to smooth deformations of the boundary. We prove that the boundary integral operators are infinitely differentiable without loss of regularity. The potential operators are infinitely shape differentiable away from the boundary, whereas their derivatives lose regularity near the boundary. We study the shape differentiability of surface differential operators.
The  shape differentiability properties of the usual strongly singular or hypersingular boundary integral operators of interest in  acoustic, elastodynamic or electromagnetic potential theory can then be established by expressing them in terms of  integral operators with weakly singular kernels and of surface differential operators. 
\end{abstract}      

\keywords{Boundary integral operators, pseudo-homogeneous kernels, fundamental solution, surface differential operators, shape derivatives, Sobolev spaces.}

\date{}
\maketitle

\section{Introduction}

Optimal shape design problems and inverse problems involving the scattering of    time-harmonic waves are of practical interest in many important fields of applied physics including  radar and sonar applications,  structural design, bio-medical imaging and non destructive testing. We develop new analytic tools that can be used in algorithms for the numerical solution of such problems.

Shape derivatives are a classical tool in shape optimization  and are  also widely used in  inverse obstacle scattering.  In shape optimization, where extrema of cost functions have to be determined, the analysis of iterative methods  requires the study of the derivative of the solution of  a scattering problem  with respect to the shape of the boundary of the obstacle.  An explicit form of the shape derivatives is required in view of their implementation in iterative algorithms such as gradient methods or Newton's method \cite{DelfourZolesio, PierreHenrot, Zolesio}.  By the method of boundary integral equations,  the shape analysis of the solution of the scattering problem with respect to deformations of the obstacle is obtained from the G\^ateaux differentiability analysis of boundary integral  operators and potentials with weakly singular, strongly singular, or hypersingular kernels. An expression of the shape derivatives of the solution can then be  computed by taking the derivative of its integral representation. This technique  was introduced  for the  Dirichlet and Neumann problems in acoustic scattering by Potthast \cite{Potthast2, Potthast1} and applied to the Dirichlet problem in elastic scattering by Charalambopoulos \cite{Charalambopoulos} in the framework of H\"older continuous and differentiable function spaces.  More recently these results were exploited in acoustic inverse obstacle scattering  to develop novel methods in which a system of nonlinear integral equations  has to be solved by a regularized iterative method \cite{KressRundell,IvanyshynKress2,IvanyshynKress}. 
 
  An extension of the technique to elasticity and electromagnetism requires the shape  differentiability analysis of  the relevant boundary integral operators. More generally, we are concerned  in this paper with the G\^ateaux differentiability of boundary integral operators  with strongly and weakly singular pseudo-homogeneous kernels acting between classical Sobolev spaces,  with respect to smooth deformations of the boundary considered as  a hypersurface of $\R^d$ with $d\in\N$, $d\ge2$. This family of integral operators covers the case of the single and double layer integral operators from the acoustic and the elastic scattering potential theory. The differentiability properties of the hypersingular boundary integral operators  can then be established by expressing them as products of  integral operators with weakly singular kernels and of surface differential operators. In return, however, we  have to study the shape differentiability of surface differential operators. The electromagnetic case presents a specific difficulty: The associated boundary integral operators act as bounded operators on the space of tangential vector fields of mixed regularity  $\TT\HH\sp{-\frac{1}{2}}(\Div_{\Gamma},\Gamma)$. The very definition of the shape derivative of an operator defined on this energy space poses non-trivial problems. This is the subject of the second part of this paper \cite{CostabelLeLouer} where we propose an analysis based on the Helmholtz decomposition \cite{delaBourdonnaye} of   $\TT\HH\sp{-\frac{1}{2}}(\Div_{\Gamma},\Gamma)$.

  This work contains results from the thesis \cite{FLL} where this analysis has been used to construct  and to implement  shape optimization algorithms for dielectric lenses, aimed at obtaining a prescribed radiation pattern. \medskip
 
The paper is organized as follows: 

In Section  \ref{PHkernel} we describe the family of  pseudo-differential boundary integral operators and potentials  that we consider. We use a subclass of the class of pseudo-homogeneous kernels introduced by N\'ed\'elec in his book \cite{Nedelec}. Main results on the regularity of these  operators  are set out. In Section \ref{ShapeD}, we define  the notion of shape derivative and discuss  its connection to G\^ateaux  derivatives. We also recall  elementary results about differentiability in Fr\'echet spaces, following ideas of \cite{DelfourZolesio,DelfourZolesio2} and notations of  \cite{Schwartz}. 
 
Section \ref{GDiffPH} is dedicated to the shape differentiability analysis of the integral operators. 
\Rd
We discuss different definitions of derivatives with respect to deformations of the boundary and compare them to the notions of material derivatives and shape derivatives that are common in continuum mechanics, see Remark~\ref{remmatshape}.
\Bk
We prove that shape derivatives of the boundary integral operators are operators of the same class, that the boundary integral operators are infinitely shape differentiable without loss of regularity, and that the potentials are infinitely shape differentiable away from the boundary of the obstacle, whereas their derivatives lose regularity in the neighborhood of the boundary. A main tool is the proof that the shape differentiability of the integral operators can be reduced to the one of their kernels. We also give higher order G\^ateaux derivatives  of coefficient functions such as the Jacobian of the change of variables associated with the deformation,  or the components of the unit normal vector. These results are new and allow us to obtain explicit forms of higher order derivatives of the integral operators.  A utilization for the implementation of higher order iterative methods is conceivable. 
   
The shape differentiability properties of  usual  surface differential operators is given in the last section. Again we prove their infinite G\^ateaux differentiability and give an explicit expression of their derivatives. These are then  applied to obtain the derivatives of hypersingular boundary integral operators from acoustic, elastic and electromagnetic potential theory. 

Notice that our shape differentiability analysis is realized without restriction to particular classes of deformations of the boundary, such as it is frequently done in the calculus of variations, namely restriction to deformations normal to the surface  as suggested by the structure  theorems for shape derivatives \cite{Hadamard, PierreHenrot,  Zolesio}, or consideration of radial deformations of star-shaped surfaces  \cite{ColtonKress, IvanyshynKress2,IvanyshynKress}.

\section{Pseudo-homogeneous kernels}\label{PHkernel}
Let $\Omega$ denote a bounded domain in $\R^{d}$ with $d\ge2$  and let $\Omega^c$ denote the exterior domain $\R^d\setminus\overline{\Omega}$. 
In this paper, we will assume that the boundary $\Gamma$ of $\Omega$ is a smooth closed hypersurface. Let $\nn$ denote the outer unit normal vector on $\Gamma$.

For a domain $G\subset\R^d$ we denote by $H^s(G)$ the usual $L^2$-based Sobolev space of order $s\in\R$, and by $H^s_{\loc}(\overline G)$ the space of functions whose restrictions to any bounded subdomain $B$ of $G$ belong to $H^s(B)$.

For any $t\in \R$ we denote by $H^t(\Gamma)$ the standard Sobolev space on the boundary $\Gamma$. The dual of $H^t(\Gamma)$  with respect to the $L^2$  scalar product is  $H^{-t}(\Gamma)$. Vector functions and spaces of vector functions will be denoted by boldface letters.

For $\alpha=(\alpha_{1},\hdots,\alpha_{d})\in \N^d$ and $z=(z_{1},\hdots,z_{d})\in\R^d$ we denote  by $\dfrac{\partial^{|\alpha|}}{\partial z^\alpha}$ the linear partial differential operator defined by
$$
\dfrac{\partial^{|\alpha|}}{\partial z^\alpha}=\dfrac{\partial^{\alpha_{1}}}{\partial z_{1}^{\alpha_{1}}}\cdots\dfrac{\partial^{\alpha_{d}}}{\partial z_{d}^{\alpha_{d}}},
$$
where $|\alpha|=\alpha_{1}+\cdots+\alpha_{d}$. For $m\in\N$, 
the total differential of order $m$, a symmetric $m$-linear form on $\R^{d}$,  is denoted by $\D^m$.

\medskip

 The integral operators we consider can be written in the form 
\begin{equation}\label{p}
\mathcal{K}_{\Gamma}u(x)= \int_{\Gamma}k(y,x-y)u(y)d s(y),\quad x\in\Gamma,
\end{equation} 
where the integral is assumed to exist in the sense of  a Cauchy principal value and the kernel  $k$ is regular with respect to the variable  $y\in\Gamma$ and pseudo-homogeneous 
with respect to the variable  $z=x-y\in\R^d$. We recall the regularity properties of these operators on the Sobolev  spaces  $H^t(\Gamma)$ for all $t\in\R$, available also for their adjoint operators
\begin{equation}
\mathcal{K}_{\Gamma}^*(u)(x)=\int_{\Gamma}k(x,y-x)u(y)d s(y),\;x\in\Gamma.
\end{equation}
We use a variant of the class of weakly singular kernels introduced by  N\'ed\'elec  in \cite[pp.~168ff]{Nedelec}. 
 More details can be found in \cite{Eskin, Hormander, Journe, Meyer, Taylor1, Taylor2}.
 
\begin{definition} 
The kernel $G(z)\in\mathscr{C}^{\infty}\big(\R^d\setminus\{0\}\big)$  is said to be \emph{ho\-mo\-ge\-ne\-ous of class} $-m$  for an integer $m\ge0$ if
$$
 \begin{array}{l}\text{(i) for any }\alpha\in \N^d \text{ there is a constant }C_{\alpha}\text{ such that for all }z\in\R^d\setminus\{0\}\\ \text{we have }\qquad\left|\dfrac{\partial^{|\alpha|}}{\partial z^{\alpha}}G(z)\right|\leq C_{\alpha}|z|^{-(d-1)+m-|\alpha|},\\ 
\text{(ii) for any }\alpha\in \N^d\text{ with }|\alpha|=m,\text{ the function }
\dfrac{\partial^{|\alpha|}}{\partial z^{\alpha}}G(z)\text{ is homogeneous}\\\text{ of degree }-(d-1)\text{ with respect to the  variable }z,\\\\
\text{(iii)}\quad \D^mG(z)\text{ is an odd function of $z$}.\end{array}
$$
\end{definition}

\begin{remark}
(i) The number $-m$ in this definition is not the order of homogeneity of the kernel, but related to the order of the corresponding pseudodifferential operator defined on the $d-1$-dimensional manifold $\Gamma$.\\
(ii) Our condition (iii) is stronger than the vanishing condition in Nedelec's original definition, but it is easier to verify, and it is satisfied for the classical integral operators we will be considering. 
\end{remark}

\begin{definition} 
The kernel $k(y,z)$ defined on $\Gamma\times\left(\R^d\setminus\{0\}\right)$   is said to be \emph{pseudo-homo\-gen\-eous of class} $-m$ for an integer $m$ such that  $m\ge0$, if  the kernel $k$ admits the following asymptotic expansion when $z$ tends to $0$:
\begin{equation}
\label{(dev)}
  k(y,z)=\sum_{j\ge0,\ell}b_{m+j}^{\ell}(y)G_{m+j}^{\ell}(z),
\end{equation}
where for $j=0,1,...$ the sum over $\ell$ is finite, $b_{m+j}^{\ell}$ belongs to $\mathscr{C}^{\infty}(\Gamma)$ and $G_{m+j}^{\ell}$ is homogeneous of class $-(m+j)$. 
\end{definition}
In \eqref{(dev)}, one can also consider coefficient functions of the form $b_{m+j}(x,y)$ with $x=y+z$, but using Taylor expansion of such coefficients at $z=0$, we see that this would define the same class of kernels as with \eqref{(dev)}.

\begin{example} (\textbf{Acoustic kernels}) 
\label{exacoustic}
Let $\kappa\in\C\setminus\{0\}$ with $\Im(\kappa)\ge0$ and $d=2$ or $d=3$. The fundamental solution 
$$
 G_{a}(\kappa, z)=
 \left\{\begin{array}{ll}\dfrac{i}{4}H^{(1)}_{0}(\kappa|z|)&\text{when }d=2\\
 \dfrac{e^{i\kappa|z|}}{4\pi|z|}&\text{when }d=3 
 \end{array}\right.
$$  
of the Helmholtz equation $\Delta u+\kappa^2 u=0$ in $\R^d$   is pseudo-homogeneous of class $-1$.  Its normal derivative $\frac{\partial}{\partial \nn(y)}G_{a}(\kappa, z)$ is {\it a priori} pseudo-homogeneous of class 0 but one can show  that in the case of smooth boundaries it is a pseudo-homogeneous kernel  of class -1. 

Indeed one can write $$\dfrac{e^{i\kappa|z|}}{4\pi|z|}=\frac{1}{|z|}+i\kappa-\frac{\kappa^2}{2}|z|-\frac{i\kappa^3}{6}|z|^2+\hdots$$The first term is homogeneous of class $-1$, the second term is smooth and for $j\ge3$ the $j$-th term is homogeneous of class $-(1+j)$. The double layer kernel has the expansion 
$$
 \frac{\partial}{\partial \nn(y)}G_{a}(\kappa, z)=\nn(y)\cdot\nabla^zG_{a}(\kappa, z)=(\nn(y)\cdot z)\left(\!-\frac{1}{|z|^3}-\frac{\kappa^2}{2}\frac{1}{|z|}-\frac{i\kappa^3}{3}+\hdots\right).
$$ 
One can prove  that the function $g(x,y)={\nn(y)\cdot(x-y)}$ behaves as $|x-y|^2$ when $z=x-y\to0$ (see for instance \cite[p.\ 173]{Nedelec}). We refer to example \ref{Dk} for a proof using a local coordinate system.
\end{example}

\begin{example} (\textbf{Elastodynamic kernels}) 
\label{exelastic}
Let $\omega\in\R$ and $d=2$ or $d=3$. Denote by $\rho,\mu$ and $\lambda$ the density and Lam\'e's constants.  The symmetric fundamental solution of the Navier equation $-\mu\Delta\uu-(\mu+\lambda)\nabla\Div\uu-\rho w^2 \uu=0$, given by 
$$
  G_{e}(\kappa_{s},\kappa_{p}, z)=\frac{1}{\mu}\left(G_{a}(\kappa_{s},z)\cdot\Id_{\R^d}+\frac{1}{\kappa_{s}^2}\Hess\big(G_{a}(\kappa_{s},z)-G_{a}(\kappa_{p},z)\big)\right),
$$ with $\kappa_{s}=\omega\sqrt{\frac{\rho}{\mu}}$ and $\kappa_{p}=\omega\sqrt{\frac{\rho}{\lambda+2\mu}}$,   is pseudo-homogeneous of class $-1$ . The traction operator is defined by 
$$T\uu=2\mu\frac{\partial\uu}{\partial\nn}+\lambda\big(\Div\uu\big)\nn+\mu\,\nn\wedge\Rot\uu.$$
The double layer kernel  $ \transposee{\big(T_{y}G_{e}(\kappa_{s},\kappa_{p}, x-y)\big)}$ is pseudo-homogeneous of class 0. The index $y$ of $T_{y}$ means that the differentiation is with respect to the variable $y$. Notice that $T_{y}G_{e}(\kappa_{s},\kappa_{p}, x-y)$ is the tensor obtained by applying the traction operator $T_{y}$ to each column of $G_{e}(\kappa_{s},\kappa_{p}, x-y)$. 
\end{example}  

 For the proof of the following theorem we refer to \cite{Nedelec, Taylor2}. 
\begin{theorem}Let $k$ be a  pseudo-homogeneous kernel of class $-m$%
. The  associated boundary integral operator $\mathcal{K}_{\Gamma}$ given by \eqref{p} is linear and continuous from $H^{t}(\Gamma)$ to $H^{t+m}(\Gamma)$ for all $t\in\R$.
The same result is true for the adjoint operator $\mathcal{K}_{\Gamma}^{*}$.\newline
\end{theorem}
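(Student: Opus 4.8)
The plan is to identify $\mathcal{K}_{\Gamma}$ as a classical pseudodifferential operator of order $-m$ on the compact manifold $\Gamma$ and then to invoke the standard Sobolev mapping property of such operators; this is essentially the route taken in \cite{Nedelec, Taylor2}. The first step is to use the asymptotic expansion \eqref{(dev)} to peel off a smooth remainder. Having fixed $t$, truncate the series after $N$ terms, $k(y,z)=\sum_{j=0}^{N-1}\sum_{\ell}b_{m+j}^{\ell}(y)G_{m+j}^{\ell}(z)+R_{N}(y,z)$, where by the definition of a pseudo-homogeneous kernel of class $-m$ the remainder satisfies $\big|\partial_{z}^{\alpha}R_{N}(y,z)\big|\le C_{\alpha}|z|^{-(d-1)+m+N-|\alpha|}$ uniformly in $y$. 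Choosing $N$ large enough (depending on $t$ and $d$) makes $R_{N}$ as many times continuously differentiable on $\Gamma\times\Gamma$ as we like, so the operator with kernel $R_{N}$ is regularizing and maps $H^{t}(\Gamma)$ into $H^{t+m}(\Gamma)$ trivially. It then suffices to treat a single term $u\mapsto\int_{\Gamma}b(y)G(x-y)u(y)\,ds(y)$ with $b\in\mathscr{C}^{\infty}(\Gamma)$ and $G$ homogeneous of class $-m$; since multiplication by $b$ is bounded on every $H^{t}(\Gamma)$, one may take $b\equiv1$.

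The heart of the matter is the localization and the symbol computation. Fix a finite atlas of $\Gamma$ with charts $\phi\colon U\subset\R^{d-1}\to\Gamma$ and a subordinate partition of unity; modulo smoothing operators, the operator with kernel $G(x-y)$ is a finite sum of operators of the form $u\mapsto\chi(x')\int\psi(y')G\big(\phi(x')-\phi(y')\big)\sqrt{g(y')}\,u(y')\,dy'$ on $\R^{d-1}$. Writing $\phi(x')-\phi(y')=D\phi(y')(x'-y')+O(|x'-y'|^{2})$ and using that condition (i) forces $G$ to be dominated by, and to agree to leading order with, a function homogeneous of degree $m-d+1$, a Taylor expansion in the curvature term reduces the principal contribution to the restriction of $G$ to the tangent hyperplane of $\Gamma$. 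The partial Fourier transform of that restriction in the $d-1$ tangential variables is homogeneous of degree $-(m-d+1)-(d-1)=-m$, and the oddness condition (iii) is precisely what excludes logarithmic corrections, so the resulting symbol is a classical symbol of order $-m$; the curvature corrections, together with the kernels $G_{m+j}^{\ell}$ for $j\ge1$, contribute only symbols of order $\le -m-1$. Hence $\mathcal{K}_{\Gamma}$ is a classical pseudodifferential operator of order $-m$ on $\Gamma$.

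Continuity of pseudodifferential operators of order $-m$ between Sobolev spaces on a compact manifold then gives $\mathcal{K}_{\Gamma}\colon H^{t}(\Gamma)\to H^{t+m}(\Gamma)$ for every $t\in\R$. For the adjoint, the kernel of $\mathcal{K}_{\Gamma}^{*}$ is $k(x,y-x)$, which is again pseudo-homogeneous of class $-m$: the expansion \eqref{(dev)} holds with the base point and the sign of $z$ interchanged, and all three conditions of the definition, in particular the parity condition (iii), are preserved under $z\mapsto-z$. Equivalently, the $L^{2}(\Gamma)$-adjoint of an operator of order $-m$ is again of order $-m$, so $\mathcal{K}_{\Gamma}^{*}$ enjoys the same mapping property.

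The step I expect to be the main obstacle is the symbol identification over the curved surface: verifying that replacing $\phi(x')-\phi(y')$ by its linearization $D\phi(y')(x'-y')$ perturbs the operator only by terms of strictly lower order, and that the passage from the homogeneous kernel to a genuine classical symbol is legitimate. This is most delicate when $m=0$, where $|z|^{-(d-1)}$ is not integrable over $\Gamma$ and the Cauchy principal value in \eqref{p} has to be controlled — which is exactly where hypothesis (iii) is used, since it forces the offending odd part of the kernel to have vanishing mean over spheres, the classical cancellation condition.
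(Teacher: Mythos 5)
Your outline is correct and follows exactly the route the paper relies on: the paper gives no proof of this theorem but refers to N\'ed\'elec and Taylor, where the statement is obtained by identifying $\mathcal{K}_{\Gamma}$ (after truncating the pseudo-homogeneous expansion and discarding a smoothing remainder) as a classical pseudodifferential operator of order $-m$ on the compact manifold $\Gamma$, with the parity condition (iii) playing precisely the cancellation role you describe for the principal value and for the adjoint kernel $k(x,y-x)$. Since this matches the cited argument in substance, no further comparison is needed.
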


 The following theorem is established in \cite{Eskin}.
\begin{theorem}
 Let $s\in\R$. Let $k$ be a  pseudo-homogeneous kernel of class $-m$.\\
  The potential operator  $\mathcal{P}$ defined by 
\begin{equation}
\label{p'}
\mathcal{P}(u)(x)=\int_{\Gamma}k(y,x-y)u(y)d s(y),\quad x\in\R^d\setminus\Gamma\end{equation}
is linear and continuous from $H^{s}(\Gamma)$ to 
$H^{s+m+\frac{1}{2}}(\Omega)\cup H^{s+m+\frac{1}{2}}_{loc}(\overline{\Omega^c})$.
\end{theorem}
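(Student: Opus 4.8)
The plan is to reduce the statement by localisation to a convolution-type model potential over a half-space, and then to read off the mapping property from the scaling behaviour of the partial Fourier transform of the kernel in the direction transverse to $\Gamma$; the gain of half a derivative over the mapping property of the boundary operator $\mathcal K_\Gamma$ comes precisely from this transverse variable.

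First I would localise with a finite partition of unity subordinate to a cover of $\Gamma$ by coordinate charts and diffeomorphisms flattening $\Gamma$. In each chart, writing $x-y$ in the flattened coordinates and Taylor-expanding the chart maps turns the kernel $k(y,x-y)$, modulo kernels of the same pseudo-homogeneous type but of strictly lower (hence smoother) class, into a finite sum of terms $b(y')\,G_{y'}(x'-y',x_d)$ with $b\in\mathscr C^\infty_{\mathrm c}$ and $G_{y'}$ homogeneous of class $-m$ depending smoothly on $y'$. Freezing the $y'$-dependence on shrinking charts (the error being again of lower class), absorbing $b$ into a bounded multiplier, and truncating the expansion \eqref{(dev)} after finitely many terms — the remainder being a $\mathscr C^{N}$ kernel with $N$ arbitrarily large, whose potential maps $H^s(\Gamma)$ continuously into $H^{s'}_{\loc}$ of each side with $s'$ arbitrarily large, hence is harmless — everything reduces to estimating the convolution model
\[
  v(x',x_d)=\int_{\R^{d-1}} G(x'-y',x_d)\,\widetilde u(y')\,dy',\qquad x_d\neq0,
\]
with $G$ homogeneous of class $-m$, from $H^s(\R^{d-1})$ (with compactly supported data) into $H^{s+m+\frac12}$ of the two half-spaces $\{\pm x_d>0\}$. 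Since multiplication by smooth functions and smooth changes of variables are bounded on all the Sobolev spaces involved and since a class-$-(m+j)$ term, $j\ge0$, then maps into $H^{s+(m+j)+\frac12}\subset H^{s+m+\frac12}$, this will yield the theorem after re-assembly.

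The core is Fourier-analytic. Taking the Fourier transform in the tangential variable $x'$ gives $\widehat v(\xi',x_d)=g(\xi',x_d)\,\widehat{\widetilde u}(\xi')$, where $g(\cdot,x_d)$ is the partial Fourier transform of $G(\cdot,x_d)$. The key lemma is that, $G$ being homogeneous of class $-m$, one has the scaling identity
\[
  g(\xi',x_d)=|\xi'|^{-m}\,\Phi\!\left(\omega,|\xi'|x_d\right),\qquad \omega=\tfrac{\xi'}{|\xi'|},
\]
where $\Phi(\omega,\cdot)$ is smooth away from $0$, rapidly decreasing together with all its $t$-derivatives as $|t|\to\infty$, of at most polynomial (or logarithmic) growth as $t\to0^{\pm}$, all uniformly in $\omega$; in particular $t\mapsto\partial_t^k\Phi(\omega,t)$ lies in $L^2(\R_\pm)$ with norm bounded uniformly in $\omega$. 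Granting this, the change of variable $t=|\xi'|x_d$ gives
\[
  \int_0^{\infty}\big|\partial_{x_d}^{k}\widehat v(\xi',x_d)\big|^2\,dx_d
   =|\xi'|^{\,2k-2m-1}\,\big\|\partial_t^{k}\Phi(\omega,\cdot)\big\|_{L^2(\R_+)}^2\,\big|\widehat{\widetilde u}(\xi')\big|^2 ,
\]
and similarly with $\R_+$ replaced by $\R_-$. When these are inserted into the norm of $H^{\sigma}$ of a half-space with $\sigma=s+m+\tfrac12$ (it is enough to bound it on bounded sets, the data being compactly supported), the powers of $|\xi'|$ always add up to $2\sigma-2m-1=2s$, so every contribution is dominated by $\int_{\R^{d-1}}(1+|\xi'|^2)^{s}\,|\widehat{\widetilde u}(\xi')|^2\,d\xi'=\|\widetilde u\|_{H^{s}(\R^{d-1})}^2$, the low-frequency part being harmless after a cutoff. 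Re-assembling yields continuity of $\mathcal P$ from $H^s(\Gamma)$ into $H^{s+m+\frac12}(\Omega)$ and into $H^{s+m+\frac12}$ of each bounded part of $\Omega^c$, i.e. into $H^{s+m+\frac12}_{\loc}(\overline{\Omega^c})$.

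The main obstacle is the key lemma on the transverse partial Fourier transform $g(\xi',x_d)$: one must justify the scaling identity with some care — for $m$ large, $G$ is not locally integrable, so $\widehat G$ is merely a tempered distribution, and logarithmic terms appear when $m-d+1$ is a non-positive integer — and, above all, establish the uniform bound $\|\partial_t^{k}\Phi(\omega,\cdot)\|_{L^2(\R_\pm)}<\infty$. It is exactly this $L^2$-integrability in the transverse variable, a consequence of the homogeneity of $G$ together with its smoothness away from the origin, that forces the gain over the boundary-operator exponent $m$ to be precisely $\tfrac12$. Everything else — the partition of unity, the changes of chart, the $\mathscr C^{N}$-remainder estimate, the boundedness of multiplication by smooth functions, and the description of the half-space Sobolev norms used above — is routine. (Equivalently, $\mathcal P$ is a Poisson-type operator whose symbol is obtained from the partial Fourier transform of $k$, and the estimate is part of the transmission calculus for pseudodifferential boundary problems followed in \cite{Eskin}.)
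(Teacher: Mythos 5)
The paper itself gives no proof of this theorem (it is quoted from \cite{Eskin}), so the only meaningful comparison is with the standard argument from that reference, and your outline is exactly that argument: localise and flatten, take the tangential Fourier transform, use the homogeneity to rescale the transverse variable, and read the $\tfrac12$-gain off the $L^2$ norm in $x_d$. In outline this is sound, and the bookkeeping of exponents ($2\sigma-2m-1=2s$) is correct.

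There is, however, a genuine gap at precisely the point you flag as the ``key lemma'', and it is not merely a matter of care: the uniform bound $\|\partial_t^k\Phi(\omega,\cdot)\|_{L^2(\R_\pm)}<\infty$ for \emph{all} $k$ is \emph{not} ``a consequence of the homogeneity of $G$ together with its smoothness away from the origin''. For a kernel satisfying only conditions (i)--(ii) of the definition, $\partial_t^k\Phi(\omega,t)$ in general behaves like $t^{m-k}$ (up to logarithms) as $t\to0^{\pm}$, so it leaves $L^2$ as soon as $k\ge m+1$; since the theorem is claimed for every $s\in\R$, arbitrarily many transverse derivatives are needed and your estimate collapses for large $s$. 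Concretely, take $d=2$, $m=0$, $G(z)=|z|^{-1}$: it is homogeneous of degree $-(d-1)$ and smooth away from $0$, but even, so condition (iii) fails; here $\Phi(\omega,t)=2K_0(|t|)$, $\partial_t\Phi\sim-1/t$, and the potential of a density $u\in\mathscr{C}^\infty_{\mathrm c}$ has a $\log|x_d|$ singularity across the boundary, hence lies in no $H^{\sigma}_{\loc}$ with $\sigma\ge\tfrac12$ near $\Gamma$ --- the stated mapping property fails for every $s\ge0$. Thus any correct proof must use the parity hypothesis (iii) (oddness of $\D^mG$; this is the transmission condition, or N\'ed\'elec's vanishing-mean condition), which your sketch never invokes. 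What you actually need to prove, using (iii), is the stronger statement that $\Phi(\omega,\cdot)$ is smooth up to $t=0$ \emph{from each side}, with all one-sided derivatives bounded uniformly in $\omega$ and rapidly decaying as $|t|\to\infty$ --- for instance by showing that the full Fourier transform of each homogeneous term is, modulo smoother errors, a homogeneous symbol of degree $-m-1$ whose parity forces its inverse Fourier transform in $\xi_d$ to be one-sidedly smooth at $t=0$ (as in the model $\widehat G(\omega,\xi_d)=(1+\xi_d^2)^{-1}$, giving $e^{-|t|}$). With that lemma in place, the rest of your computation does yield the claimed continuity into $H^{s+m+\frac12}(\Omega)$ and $H^{s+m+\frac12}_{\loc}(\overline{\Omega^c})$.
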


\section{Some remarks on shape derivatives} \label{ShapeD}

We want to study the dependence of operators defined by integrals over the boundary $\Gamma$  on the geometry of $\Gamma$. This dependence is highly nonlinear. The usual  tools of differential calculus require the framework of  topological vector  spaces which are locally convex at least, a framework that is not immediately present in the case of shape functionals. The standard approach consists in representing the variations of the domain $\Omega$ by elements of a  function space. We consider  variations generated by transformations of the form 
$$
  x\mapsto x+r(x)
  $$ 
of point $x$ in the space $\R^d$, where $r$ is a smooth vector function defined in the neighborhood of $\Gamma$. This transformation deforms the domain $\Omega$  into a  domain $\Omega_{r}$ with boundary $\Gamma_{r}$. The functions $r$ are assumed to be  sufficiently small  elements of the Fr\'echet space $\mathcal{X}=\mathscr{C}^{\infty}(\Gamma,\R^d)$ in order that $(\Id+r)$ is a diffeomorphism from $\Gamma$ to  $$\Gamma_{r}=(\Id+r)\Gamma=\left\{x_{r}=x+r(x); x\in\Gamma\right\}.$$
 For $\varepsilon$ small enough we set 
$$
  B^{\infty}(0,\varepsilon)= \left\{r\in\mathscr{C}^{\infty}(\Gamma,\R^d),\; d_{\infty}(0,r)<\varepsilon\right\},
$$
where $d_{\infty}$ is the distance induced by the family of non-decreasing norms $(\|\cdot\|_{k})_{k\in\N}$ defined by
$$
 \|r\|_{k}=\sup_{0\leq m\leq k}\;\sup_{x\in\R^d}\left|\D^mr(x)\right|.
$$
 
 Consider a mapping $F$ defined on the set $\{\Gamma_{r};\; r\in B^{\infty}(0,\varepsilon)\}$ of boundaries. We introduce a new mapping 
$$
  B^{\infty}(0,\varepsilon)\ni r\mapsto\mathcal{F}_{\Gamma}(r)=F(\Gamma_{r}).
$$ 
We define the shape derivative of the mapping $F$ through the transformation $\Gamma\ni x\mapsto x+\xi(x)\in\R^d$ by  \begin{equation}\label{defSh}d F[\Gamma;\xi]:=\lim_{t\to0}\dfrac{F(\Gamma_{t\xi})-F(\Gamma)}{t}=\lim_{t\to0}\dfrac{\mathcal{F}_{\Gamma}(t\xi)-\mathcal{F}_{\Gamma}(0)}{t}\end{equation} if the limit  exists and is finite. The shape derivatives of $F$ are related to the G\^ateaux derivatives of $\mathcal{F}_{\Gamma}$ (see \cite{PierreHenrot,Zolesio}).

Fix $r_{0}\in B^{\infty}(0,\varepsilon)$. Following the same procedure, one can construct another mapping $\mathcal{F}_{\Gamma_{r_{0}}}$ defined  on the family of boundaries $$\{(\Id+r')(\Gamma_{r_{0}});\; r'\in B^{\infty}(0,\varepsilon')\}.$$
 Notice that $\mathcal{F}_{\Gamma_{r_{0}}}(0)=F(\Gamma_{r_{0}})=\mathcal{F}_{\Gamma}(r_{0})$ and  $\mathcal{F}_{\Gamma_{r_{0}}}((r-r_{0})\circ(\Id+r_{0})^{-1})=F(\Gamma_{r})=\mathcal{F}_{\Gamma}(r)$.

\subsection{Differentiability in Fr\'echet spaces: elementary results}

Fr\'echet spaces are locally convex, metrisable and complete  topological vector spaces  on which the differential calculus available on Banach spaces can be extended. We recall some of the results. We refer to Schwartz's book \cite{Schwartz} for more details.
\medskip

Let $\mathcal{X}$ and $\mathcal{Y}$ be Fr\'echet spaces and let $U$ be a subset of $\mathcal{X}$.
\begin{definition}(\textbf{G\^ateaux semi-derivatives}) 
 The mapping  $f:U\rightarrow \mathcal{Y}$ is said to have  a G\^ateaux semiderivative at $r_{0}\in U$ in the direction of $\xi\in\mathcal{X}$ if the following limit exists in $\mathcal{Y}$
$$
  df[r_{0};\xi]=\lim_{t\rightarrow0}\dfrac{f(r_{0}+t\xi)-f(r_{0})}{t}=\frac{d}{dt}_{\big|t=0}f(r_{0}+t\xi).
$$
\end{definition}

\begin{definition}(\textbf{G\^ateaux differentiability}) 
 The mapping  $f:U\rightarrow \mathcal{Y}$ is said to be  G\^ateaux differentiable at $r_{0}\in U$ if it has G\^ateaux semiderivatives in all directions $\xi\in\mathcal{X}$ and if the mapping 
$$
  \mathcal{X}\ni \xi\mapsto df[r_{0};\xi]\in\mathcal{Y}
$$ 
is linear and continuous.
\end{definition}
 We say that $f$ is continuously (or $\mathscr{C}^1$-) G\^ateaux differentiable if it is  G\^ateaux differentiable at all $r_{0}\in U$ and the mapping 
$$
   U\times\mathcal{X}\ni df:(r_{0},\xi)\mapsto df[r_{0};\xi]\in\mathcal{Y}
$$
is continuous.

\begin{remark}In the calculus of shape derivatives, we usually consider  the G\^ateaux derivative at $r=0$  only. This is due to the result: If $\mathcal{F}_{\Gamma}$ is  G\^ateaux differentiable on $B^{\infty}(0,\varepsilon)$, then for all $\xi\in\mathcal{X}$ we have
$$
  d\mathcal{F}_{\Gamma}[r_{0};\xi]=dF[\Gamma_{r_{0}};\xi\circ(\Id+r_{0})^{-1}]=d\mathcal{F}_{\Gamma_{r_{0}}}[0;\xi\circ(\Id+r_{0})^{-1}].
$$
\end{remark}

\begin{definition}(\textbf{Higher order derivatives}) 
Let $m\in\N$.  We say that $f$ is $(m+1)$-times continuously (or $\mathscr{C}^{m+1}$-) G\^ateaux differentiable  if it is $\mathscr{C}^{m}$-G\^ateaux differentiable  and 
$$
 U\ni r\mapsto d^mf[r;\xi_{1},\hdots,\xi_{m}]
$$ 
is continuously G\^ateaux differentiable for all $m$-tuples $(\xi_{1},\hdots,\xi_{m})\in\mathcal{X}^{m}$. Then for all $r_{0}\in U$ the  mapping
$$
  \mathcal{X}^{m+1}\ni(\xi_{1},\hdots,\xi_{m+1})\mapsto d^{m+1}f[r_{0};\xi_{1},\hdots,\xi_{m+1}]\in\mathcal{Y}
$$ 
is $(m+1)$-linear, symmetric and continuous.   We say that $f$ is $\mathscr{C}^\infty$-G\^ateaux differentiable if it is $\mathscr{C}^m$-G\^ateaux differentiable for all $m\in\N$.
\end{definition}
\begin{proposition}
 \label{curve} 
Let $f:U\rightarrow\mathcal{Y}$ be $\mathscr{C}^{m}$-G\^ateaux differentiable. Let us fix  $r_{0}\in U$ and $\xi\in\mathcal{X}$. We set $\gamma(t)=f(r_{0}+t\xi)$.

i)  The function of a real variable $\gamma$ is of class $\mathscr{C}^m$ in the neighborhood of zero and
\begin{equation}
\label{GCm}
\gamma^{(m)}(t)=\frac{d^m}{d t^m}_{\big|t=0}f(r_{0}+t\xi)=d^mf[r_{0};\underbrace{\xi,\hdots,\xi}_{m\text{ times}}].
\end{equation}

 ii) We use the notation 
$$\frac{\partial^m}{\partial r^m}f[r_{0};\xi]=d^mf[r_{0};\underbrace{\xi,\hdots,\xi}_{m\text{ times}}].$$
 We then have
\begin{equation}
\label{Gsym}
d^mf[r_{0};\xi_{1},\hdots,\xi_{m}]=\frac{1}{m!}\sum_{p=1}^m (-1)^{m-p}\sum_{1\leq i_{1}<\cdots<i_{p}\leq m}\dfrac{\partial^m}{\partial r^m}f[r_{0};\xi_{i_{1}}+\hdots+\xi_{i_{p}}].
\end{equation}
\end{proposition}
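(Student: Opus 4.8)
The plan is to treat part (i) first, as it is essentially a one-variable chain rule argument, and then to deduce the polarization identity (ii) by a purely algebraic manipulation. For part (i), I would argue by induction on $m$. The base case $m=1$ is just the definition of the G\^ateaux semiderivative applied to the curve $t\mapsto \gamma(t)=f(r_0+t\xi)$: since $f$ is $\mathscr{C}^1$-G\^ateaux differentiable, the limit defining $\gamma'(t_0)$ exists for every $t_0$ near $0$ and equals $df[r_0+t_0\xi;\xi]$, which is continuous in $t_0$ because $df$ is continuous on $U\times\mathcal{X}$; hence $\gamma\in\mathscr{C}^1$ with $\gamma'(0)=df[r_0;\xi]$. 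For the inductive step, assuming the claim for $m$, one applies the induction hypothesis to the map $r\mapsto d^mf[r;\xi,\dots,\xi]$ (which is $\mathscr{C}^1$-G\^ateaux differentiable by the definition of $\mathscr{C}^{m+1}$-differentiability): the curve $t\mapsto d^mf[r_0+t\xi;\xi,\dots,\xi]=\gamma^{(m)}(t)$ is therefore $\mathscr{C}^1$ in $t$, with derivative $d^{m+1}f[r_0+t\xi;\xi,\dots,\xi]$, giving $\gamma\in\mathscr{C}^{m+1}$ and the stated formula \eqref{GCm}. The only subtlety is justifying that the one-variable difference quotient limit coincides with the G\^ateaux semiderivative along the fixed direction $\xi$, which is immediate from the definitions, and that continuity in $t_0$ of $t_0\mapsto df[r_0+t_0\xi;\xi]$ follows from joint continuity of $df$ together with continuity of $t_0\mapsto r_0+t_0\xi$; no completeness of $\mathcal{Y}$ beyond what Fr\'echet spaces provide is needed, since we are only asserting existence of limits that are postulated by the differentiability hypothesis.

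For part (ii), the formula \eqref{Gsym} is the standard polarization formula expressing a symmetric $m$-linear form in terms of its diagonal restriction, here $\tfrac{\partial^m}{\partial r^m}f[r_0;\eta]=d^mf[r_0;\eta,\dots,\eta]$. I would prove it by expanding the right-hand side using multilinearity: writing $\tfrac{\partial^m}{\partial r^m}f[r_0;\xi_{i_1}+\cdots+\xi_{i_p}]=\sum d^mf[r_0;\xi_{j_1},\dots,\xi_{j_m}]$ over all tuples $(j_1,\dots,j_m)\in\{i_1,\dots,i_p\}^m$, and then collecting, for each multi-index pattern, the total coefficient it receives. A term $d^mf[r_0;\xi_{j_1},\dots,\xi_{j_m}]$ involving exactly the index set $S\subseteq\{1,\dots,m\}$ of size $q$ appears, for a given $p$-subset $\{i_1,\dots,i_p\}$, precisely when $S\subseteq\{i_1,\dots,i_p\}$, so its coefficient is $\tfrac1{m!}\sum_{p=q}^m(-1)^{m-p}\binom{m-q}{p-q}=\tfrac1{m!}(-1)^{m-q}\sum_{k=0}^{m-q}(-1)^{k}\binom{m-q}{k}$, which by the binomial theorem vanishes unless $q=m$, in which case it equals $\tfrac1{m!}$. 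When $q=m$ the surviving terms are exactly the $m!$ permutations of $(\xi_1,\dots,\xi_m)$, and by symmetry of $d^mf[r_0;\cdot]$ (guaranteed by the $\mathscr{C}^m$-differentiability, or alternatively re-derivable from part (i) via the classical Schwarz-type argument on the diagonal) each equals $d^mf[r_0;\xi_1,\dots,\xi_m]$, so the right-hand side collapses to $d^mf[r_0;\xi_1,\dots,\xi_m]$, proving \eqref{Gsym}.

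The main obstacle I anticipate is not in part (ii) — that is bookkeeping — but in making the inductive step of part (i) fully rigorous in the Fr\'echet setting: one must be careful that "$\mathscr{C}^m$-G\^ateaux differentiable" as defined here only asserts iterated existence of directional limits plus joint continuity of each $d^jf$, and does \emph{not} a priori give a Taylor remainder estimate or a mean value inequality (these are more delicate in Fr\'echet spaces). Fortunately the statement of the proposition asks only for $\gamma\in\mathscr{C}^m$ as a curve in $\mathcal{Y}$ and the identification of its derivatives, and for this the composition-with-an-affine-curve argument suffices: differentiability of $\gamma$ at $t_0$ reduces to a single semiderivative of $f$ at $r_0+t_0\xi$ in direction $\xi$, and continuity of $\gamma^{(j)}$ reduces to continuity of $d^jf$ restricted to the line $\{r_0+t\xi\}\times\{(\xi,\dots,\xi)\}$. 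So the proof stays entirely within the hypotheses and needs no additional structure on $\mathcal{X},\mathcal{Y}$.
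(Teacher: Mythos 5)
Your proof is correct; note, however, that the paper itself offers no proof of this proposition --- it is recalled as an elementary fact about G\^ateaux differentiability in Fr\'echet spaces, with a pointer to Schwartz's book \cite{Schwartz} --- so there is no in-paper argument to compare against, and your write-up supplies what the paper leaves implicit. For (i), the induction along the affine curve $t\mapsto r_{0}+t\xi$ is exactly the right mechanism: the base case is the definition of the semiderivative at $r_{0}+t_{0}\xi$ in the direction $\xi$, the inductive step uses only that $r\mapsto d^{m}f[r;\xi,\dots,\xi]$ is $\mathscr{C}^{1}$-G\^ateaux differentiable (which is precisely the paper's definition of $\mathscr{C}^{m+1}$-differentiability), and you correctly establish $\gamma^{(m)}(t)=d^{m}f[r_{0}+t\xi;\xi,\dots,\xi]$ for all $t$ near $0$, which is what the induction needs and what \eqref{GCm} is meant to assert despite its slightly ambiguous ``$|_{t=0}$'' notation; continuity of $\gamma^{(m)}$ then follows from the assumed joint continuity of $d^{m}f$ restricted to the line. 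For (ii), the inclusion--exclusion computation is right: expanding $\frac{\partial^{m}}{\partial r^{m}}f[r_{0};\xi_{i_{1}}+\dots+\xi_{i_{p}}]$ by multilinearity, a tuple using exactly $q$ distinct indices receives total weight $\frac{1}{m!}\sum_{p=q}^{m}(-1)^{m-p}\binom{m-q}{p-q}$, which vanishes for $q<m$, and the $m!$ surviving permutation terms collapse to $d^{m}f[r_{0};\xi_{1},\dots,\xi_{m}]$ by symmetry --- a property you may legitimately invoke, since the paper's definition of higher-order G\^ateaux differentiability includes the symmetry of the $m$-linear form.
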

 Thus the knowledge of $\dfrac{\partial^m}{\partial r^m}f[r_{0};\xi]$  suffices to determine the expression of $d^mf[r_{0};\xi_{1},\hdots,\xi_{m}]$.
 
 \begin{proposition} 
 Let $f:U\rightarrow\mathcal{Y}$ be $\mathscr{C}^{m}$-G\^ateaux differentiable. Let us fix  $r_{0}\in U$ and $\xi\in\mathcal{X}$ with $\xi$ sufficiently small. Then we have the following Taylor expansion with integral remainder : 
 $$
 f(r_{0}+\xi)=\sum_{k=1}^{m-1}\frac{1}{k!}\frac{\partial^k}{\partial r^k}f[r_{0};\xi]+\int_{0}^1\frac{(1-\lambda)^m}{m!}\frac{\partial^{m}}{\partial r^{m}}f[r_{0}+\lambda \xi;\xi]d\lambda.
 $$
 \end{proposition}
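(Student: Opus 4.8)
The plan is to reduce the statement to the classical one‑dimensional Taylor formula with integral remainder applied to a $\mathcal{Y}$‑valued function of a real variable. Fix $r_{0}\in U$ and take $\xi\in\mathcal{X}$ small enough that the slightly extended segment $\{r_{0}+t\xi:\ -\delta\le t\le 1+\delta\}$ is contained in $U$ for some $\delta>0$; this is possible because in the situations of interest $U=B^{\infty}(0,\varepsilon)$ is open and convex, whence $\|r_{0}+t\xi\|_{k}\le\|r_{0}\|_{k}+|t|\,\|\xi\|_{k}<\varepsilon$ for $\|\xi\|$ small. Set $\gamma(t)=f(r_{0}+t\xi)$. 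Applying Proposition~\ref{curve} with base point $r_{0}+s\xi$ for each $s\in[0,1]$ shows that $\gamma$ is of class $\mathscr{C}^{m}$ on a neighborhood of $[0,1]$ and that
$$
\gamma^{(k)}(s)=\frac{\partial^{k}}{\partial r^{k}}f[r_{0}+s\xi;\xi],\qquad 0\le k\le m,
$$
with the convention $\gamma^{(0)}=\gamma$, i.e. $\frac{\partial^{0}}{\partial r^{0}}f[\cdot;\xi]=f(\cdot)$.

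Next I would invoke the Taylor formula with integral remainder for the curve $\gamma$,
$$
\gamma(1)=\sum_{k=0}^{m-1}\frac{1}{k!}\,\gamma^{(k)}(0)+\int_{0}^{1}\frac{(1-\lambda)^{m-1}}{(m-1)!}\,\gamma^{(m)}(\lambda)\,d\lambda,
$$
and substitute the values of $\gamma$ and of its derivatives found above; since $\gamma(1)=f(r_{0}+\xi)$ and the $k=0$ term is $f(r_{0})$, this is exactly the claimed expansion. The only genuine point to be checked is that this Taylor formula is valid for maps with values in the Fr\'echet space $\mathcal{Y}$. First, the $\mathcal{Y}$‑valued Riemann integral of a continuous map $[0,1]\to\mathcal{Y}$ exists: $\mathcal{Y}$ being metrisable and complete, the Riemann sums form a Cauchy net with respect to every continuous seminorm (by uniform continuity on the compact interval $[0,1]$) and therefore converge in $\mathcal{Y}$. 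With this notion of integral, the fundamental theorem of calculus $\gamma(1)-\gamma(0)=\int_{0}^{1}\gamma'(\lambda)\,d\lambda$ holds for $\mathscr{C}^{1}$ curves, and an induction on $m$ using integration by parts yields the displayed identity. Equivalently, one tests against an arbitrary continuous linear functional $\ell\in\mathcal{Y}'$ — these separate points since $\mathcal{Y}$ is locally convex (Hahn--Banach) — applies the scalar Taylor formula to $\ell\circ\gamma\in\mathscr{C}^{m}([0,1],\R)$, and uses that $\ell$ commutes with the $\mathcal{Y}$‑valued integral (immediate from its definition as a limit of Riemann sums).

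I do not expect a serious obstacle here. The substance of the argument is Proposition~\ref{curve}, which identifies the derivatives of $t\mapsto f(r_{0}+t\xi)$ with the "radial" shape derivatives $\frac{\partial^{m}}{\partial r^{m}}f$; everything else is the routine but necessary verification that elementary integral calculus — existence of the integral, the fundamental theorem of calculus, integration by parts, commutation with continuous functionals — transfers verbatim to functions valued in a Fr\'echet space, where completeness and local convexity of $\mathcal{Y}$ are precisely the properties used. The remaining care concerns the quantifier "$\xi$ sufficiently small", needed so that the whole segment $[r_{0},r_{0}+\xi]$, slightly extended, stays inside the domain where $f$ is $\mathscr{C}^{m}$‑G\^ateaux differentiable.
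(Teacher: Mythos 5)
The paper itself gives no proof of this proposition: it is recalled as part of the standard differential calculus in Fr\'echet spaces, with \cite{Schwartz} as background, so there is nothing to compare line by line. Your route is the natural (essentially the only) one and is sound: extend the segment slightly inside $U=B^{\infty}(0,\varepsilon)$, use Proposition \ref{curve} at the moving base point $r_{0}+s\xi$ to identify $\gamma^{(k)}(s)=\frac{\partial^{k}}{\partial r^{k}}f[r_{0}+s\xi;\xi]$ (continuity of $\gamma^{(m)}$ on $[0,1]$, needed for the integral, comes from the $\mathscr{C}^{m}$ hypothesis), then invoke the one-dimensional Taylor formula for curves with values in a complete locally convex space, justified either by Riemann sums plus the fundamental theorem of calculus and integration by parts, or by testing with continuous linear functionals via Hahn--Banach.

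One point you should not have glossed over: what your argument actually yields is the classical identity $f(r_{0}+\xi)=f(r_{0})+\sum_{k=1}^{m-1}\frac{1}{k!}\frac{\partial^{k}}{\partial r^{k}}f[r_{0};\xi]+\int_{0}^{1}\frac{(1-\lambda)^{m-1}}{(m-1)!}\frac{\partial^{m}}{\partial r^{m}}f[r_{0}+\lambda\xi;\xi]\,d\lambda$, which is \emph{not} literally the displayed statement: as printed, the proposition omits the zeroth-order term $f(r_{0})$ (the sum starts at $k=1$) and has remainder weight $\frac{(1-\lambda)^{m}}{m!}$ instead of $\frac{(1-\lambda)^{m-1}}{(m-1)!}$; taken at face value it is false already for $m=1$ and $f$ constant. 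So your closing claim that the substitution gives ``exactly the claimed expansion'' is inaccurate; the honest conclusion is that you prove the (correct) standard formula and that the statement as printed contains misprints that should be flagged and corrected. With that caveat made explicit, the proof is complete.
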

  The chain and product rules  are still available for  $\mathscr{C}^m$-G\^ateaux differentiable maps between Fr\'echet spaces.

\section{Shape differentiability of boundary integral operators} \label{GDiffPH}
 Let $x_{r}$ denote an element of $\Gamma_{r}$ and let $\nn_{r}$ be the outer unit  normal vector to  $\Gamma_{r}$. When $r=0$ we write $\nn_{0}=\nn$. We denote by $d s(x_{r})$ the area element on $\Gamma_{r}$.

 In this  section we want to establish the  differentiability properties with respect to  $r\in B^{\infty}(0,\varepsilon)$ of   boundary integral operators   $\mathcal{K}_{\Gamma_{r}}$ defined for a function $u_{r}\in H^{t}(\Gamma_{r})$ by:
\begin{equation}\left(\mathcal{K}_{\Gamma_{r}}u_{r}\right)(x_{r})=\int_{\Gamma_{r}}k_{r}(y_{r},x_{r}-y_{r})u_{r}(y_{r})d s(y_{r}),\;x_{r}\in\Gamma_{r}\end{equation}
and of  potential operators   $\mathcal{P}_{r}$  defined  by:
\begin{equation}\left(\mathcal{P}_{r}u_{r}\right)(x)=\int_{\Gamma_{r}}k_{r}(y_{r},x-y_{r})u_{r}(y_{r})d s(y_{r}),\;x\in \Omega_{r}\cup\Omega_{r}^c,\end{equation}
  where $k_{r}\in\mathscr{C}^{\infty}\left(\Gamma_{r}\times\left(\R^d\setminus\{0\}\right)\right)$ is a pseudo-homogeneous kernel of class $-m$ with $m\in\N$.  
  \medskip
  
We point out that we have to analyze mappings of the form $r\mapsto \mathcal{F}_{\Gamma}(r)$ where the  domain of definition of $\mathcal{F}_{\Gamma}(r)$ varies with $r$. This is the main difficulty encountered in the calculus of shape variations. We propose  different strategies according to the definition  of the mapping $\mathcal{F}_{\Gamma}$.

\noindent
(i) A first idea, quite classical (see \cite{PierreHenrot, Potthast2, Potthast1}), is that  instead of  studying mappings $r \mapsto \mathcal{F}_{\Gamma}(r)$ where $\mathcal{F}_{\Gamma}(r)=u_{r}$ is a \emph{function} defined on the boundary $\Gamma_{r}$, we consider the mapping 
$$
   r \mapsto u_{r}\circ(\Id+r).
$$ 
Typical examples of such functions $u_{r}$ are  the normal vector $\nn_{r}$ on $\Gamma_{r}$  and the kernel $k_{r}$ of a boundary integral operator $\mathcal{K}_{\Gamma_{r}}$ (see Examples \ref{exacoustic} and \ref{exelastic}).
\smallskip

To formalize this, we define the transformation (``pullback'') $\tau_{r}$  which maps a function $u_{r}$ defined on $\Gamma_{r}$ to the function $u_{r}\circ(\Id+r)$ defined on $\Gamma$. For all $r\in B^{\infty}(0,\varepsilon)$, the transformation $\tau_{r}$ is linear and continuous from the function spaces $\mathscr{C}^k(\Gamma_{r})$ and  $H^t(\Gamma_{r})$ to $\mathscr{C}^k(\Gamma)$ and $H^t(\Gamma)$, respectively, and admits an inverse. We have
$$
 (\tau_{r}u_{r})(x)=u_{r}(x+r(x))\text{ and }(\tau_{r}^{-1}u)(x_{r})=u(x).
$$

\noindent
(ii) Next, for linear bounded \emph{operators} between function spaces on the boundary, we use conjugation with the pullback $\tau_{r}$: 
Instead of studying the mapping
$$
 B^{\infty}(0,\varepsilon)\ni r\mapsto \mathcal{F}_{\Gamma}(r)=\mathcal{K}_{\Gamma_{r}}\in\mathscr{L}\left(H^s(\Gamma_{r}),H^{s+m}(\Gamma_{r})\right)
$$ 
we  consider the  mapping
$$
 B^{\infty}(0,\varepsilon)\ni r\mapsto \tau_{r}\mathcal{K}_{\Gamma_{r}}\tau_{r}^{-1}\in\mathscr{L}\left(H^s(\Gamma),H^{s+m}(\Gamma)\right).
 $$
We have for $u\in H^s(\Gamma)$ and $x\in\Gamma$:
\begin{equation}
\label{boundr}
\big(\tau_{r}\mathcal{K}_{\Gamma_{r}}\tau_{r}^{-1}\big)(u)(x)
=\int_{\Gamma}k_{r}\big(y+r(y),x+r(x)-y-r(y)\big)\,u(y)\,J_{r}(y)\, d s(y),
\end{equation} 
where $J_{r}$ is the Jacobian (the determinant of the Jacobian matrix) of the change of variables on the surface, mapping $x\in\Gamma$ to $x+r(x)\in\Gamma_{r}$. 
\medskip

\noindent
(iii) The third case concerns \emph{potential operators} acting from the boundary to the domain:\\
Each domain $\Omega$ is a countable union of compact subsets: $\Omega=\bigcup\limits_{p\in\N}K_{p}$. For all $p\in\N$, there exists  $\varepsilon_{p}>0$ such that 
$K_{p}\subset \bigcap\limits_{r\in B(0,\varepsilon_{p})}\Omega_{r}$. 
Thus, instead of studying the mapping
$$
 B^{\infty}(0,\varepsilon)\ni r\mapsto \mathcal{F}_{\Gamma}(r)=\mathcal{P}_{r}\in\mathscr{L}\left(H^{s}(\Gamma_{r}),H^{s+m+\frac{1}{2}}(\Omega_{r})\right)
$$ 
we can consider the mapping
$$
 B^{\infty}(0,\varepsilon_{p})\ni r\mapsto \mathcal{P}_{r}\tau_{r}^{-1}\in\mathscr{L}\left(H^{s}(\Gamma),H^{s+m+\frac{1}{2}}(K_{p})\right).
$$
 We have for $u\in H^{s}(\Gamma)$
\begin{equation}
\label{potr}
\big(\mathcal{P}_{\Gamma}(r)\tau_{r}^{-1}\big)(u)(x)
 =\int_{\Gamma}k_{r}\big(y+r(y),x-y-r(y)\big)\,u(y)\,J_{r}(y)\, d s(y),\quad x\in K_{p}.
\end{equation}
 Then passing to the limit $p\to\infty$ we can deduce the differentiability properties of the potentials on the whole domain $\Omega$. We use the analogous technique for the exterior domain $\Omega^c$.
 
In the framework of boundary integral equations, these approaches were introduced by Potthast  \cite{Potthast2, Potthast1} in order to study the shape differentiability of solutions of acoustic boundary value problems.

\Rd
\begin{remark}
\label{remmatshape}
In continuum mechanics, when the deformation $x\mapsto r(x)=r_{0}(x)+t\xi(x)$ is interpreted as a flow with initial velocity field $\xi(x)$, one frequently considers two different derivatives of functions $u_{r}$ defined on $\Omega_{r}$. 
The \emph{material derivative} $\dot{u}_{r}$ is computed by pulling $u_{r}$ back to the reference domain $\Omega$, thus by differentiating 
$r \mapsto \tau_{r}u_{r}=u_{r}\circ(\Id+r)$.
The \emph{shape derivative} $u'_{r}(x)$ at a point $x$ is defined by differentiating $u_{r}(x)$ directly. At $r=0$ the difference between the two derivatives is a convection term:
\begin{equation}
\label{matshape}
  \dot{u}_{0} = u'_{0} + \xi\cdot\nabla u_{0}\,.
\end{equation}
This is easily seen from the definition of the material derivative
$$
 \dot{u}_{r}(x)=d(\tau u)[0;\xi](x) = \frac{d}{dt}_{\big|t=0}u_{t\xi}(x+t\xi(x))
 = d\,u[0;\xi](x) + \xi(x)\cdot\nabla u_{0}(x)\,.
$$
Relation \eqref{matshape} can be used to compute the shape derivative from the simpler material derivative, see \cite{LeugeringetalAMOptim11} for an application.

In this terminology, the derivatives of boundary functions and operators in (i) and (ii) above would be analogous to material derivatives, whereas the derivatives of potentials in (iii) correspond to shape derivatives. Instead of formally defining the terms ``material derivative'' and ``shape derivative'', we prefer here to explain in each instance precisely which G\^ateaux derivative is meant. We want to emphasize, however, that the \emph{shape derivatives} of solutions of electromagnetic transmission problems can be obtained by using the three kinds of derivatives defined above. This will be explained in detail in Part II of this work. The construction is based on an integral representation of the solution of the transmission problem by potentials, the densities of which are solutions of boundary integral equations with operators of the type studied here. Thus the mapping from the given right hand side to the solution is a composition of boundary integral operators, inverses of boundary integral operators, and potential operators. By the chain rule, its derivative is then obtained by composing boundary integral operators, their inverses, and potential operators with derivatives of type (i), (ii), and (iii) above. The same structure gives the shape gradient of shape functionals that are defined from the solution of the transmission problem. In this case, also adjoints of the boundary integral operators have to be differentiated. This poses no new problem, because adjoints of operators with quasi-homogeneous kernels have quasi-homogeneous kernels, too.
\end{remark}
\Bk

\subsection{G\^ateaux differentiability of coefficient functions}

For the  analysis of the integral operators defined by \eqref{boundr} and \eqref{potr}, we first have to analyze coefficient functions such as the Jacobian of the change of variables $\Gamma\ni x\mapsto x+r(x)\in\Gamma_{r}$, or the normal vector $\nn_{r}$ on $\Gamma_{r}$.

We use the standard surface differential operators as described in detail in \cite{Nedelec}.
For a vector function $\vv\in\mathscr{C}^k(\R^d,\C^d)$ with $k\in\N^*$, we denote by $[\nabla\vv]$ the matrix the $i$-th column of which is the gradient of the $i$-th component of $\vv$, and we write $[\D\vv]=\transposee{[\nabla\vv]}$.
The tangential gradient of a scalar function $u\in\mathscr{C}^k(\Gamma,\C)$ is defined by 
\begin{equation}\label{G}\nabla_{\Gamma}u=\nabla\tilde{u}_{|\Gamma}-\left(\nabla\tilde{u}_{|\Gamma}\cdot\nn\right)\nn,\end{equation}
where $\tilde{u}$ is an extension of $u$ to the whole space $\R^d$.
For a vector function $\uu\in\mathscr{C}^k(\Gamma,\C^d)$, we again denote by $[\nabla_{\Gamma}\uu]$ the matrix the $i$-th column of which is the tangential gradient of the $i$-th component of $\uu$ and we set  $[\D_{\Gamma}\uu]=\transposee{[\nabla_{\Gamma}\uu]}$.

We define the surface divergence of a vector function $\uu\in\mathscr{C}^k(\Gamma,\C^d)$   by 
\begin{equation}
\Div_{\Gamma}\uu=\Div\tilde{\uu}_{|\Gamma}-\left([\nabla\tilde{\uu}_{|\Gamma}]\nn\cdot\nn\right)=\Div\tilde{\uu}_{|\Gamma}-\left(\nn\cdot\frac{\partial \uu}{\partial\nn}\right),
\end{equation}
where $\tilde{\uu}$ is an extension of $\uu$ to the whole space $\R^d$.
These definitions do not depend on the choice of the extension. 

The surface Jacobian $J_{r}$ is given by the formula
$
 J_{r}=\Jac_{\Gamma}(\Id+r)=\|w_{r}\| 
$ with 
$$
 w_{r}
 =\cof(\Id+\D r_{|\Gamma})\nn=\det(\Id+\D r_{|\Gamma})\transposee{(\Id+\D r_{|\Gamma})^{-1}}\nn,
$$  
 where $\cof(A)$ means the matrix of cofactors of the matrix $A$,
 and  the normal vector $\nn_{r}$ is given by
$$
 \nn_{r}=\tau_{r}^{-1}\left(\frac{w_{r}}{\|w_{r}\|}\right).
$$
The first derivative at $r=0$ of these functions are well known, we refer for instance  to Henrot--Pierre  \cite{ PierreHenrot}. Here we present a method that allows to obtain higher order derivatives.

\begin{lemma}
\label{J} 
The functional $\mathcal{J}$ mapping $r\in B^{\infty}(0,\varepsilon)$ to the Jacobian $J_{r}\in\mathscr{C}^{\infty}(\Gamma,\R)$  is $\mathscr{C}^{\infty}$-G\^ateaux differentiable and its first derivative  at $r_{0}$ is given for $\xi\in\mathscr{C}^{\infty}(\Gamma,\R^d)$ by
$$
 d\mathcal{J}[r_{0},\xi]=J_{r_{0}}\big(\tau_{r_{0}}\Div_{\Gamma_{r_{0}}}(\tau^{-1}_{r_{0}}\xi)\big).
$$ 
\end{lemma}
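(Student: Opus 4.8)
The plan is to reduce everything to the differentiability of the elementary algebraic building blocks of $J_r = \|\cof(\Id+\D r_{|\Gamma})\nn\|$ and then reorganize the resulting expression into the claimed surface-divergence form. First I would observe that $r\mapsto \D r_{|\Gamma}$ is linear and continuous from $\mathcal{X}=\mathscr{C}^\infty(\Gamma,\R^d)$ into $\mathscr{C}^\infty(\Gamma,\R^{d\times d})$, hence $\mathscr{C}^\infty$-G\^ateaux differentiable with derivative $\xi\mapsto \D\xi_{|\Gamma}$ independent of the base point. The map $A\mapsto \cof(A)$ is polynomial in the entries of $A$, so $r\mapsto \cof(\Id+\D r_{|\Gamma})$ is $\mathscr{C}^\infty$-G\^ateaux differentiable (composition/product rule in Fr\'echet spaces, available by the last sentence of Section~\ref{ShapeD}); for $r_0$ small enough $\Id+\D r_{0|\Gamma}$ is invertible, so $w_r = \cof(\Id+\D r_{|\Gamma})\nn$ is smooth and nonvanishing near $r_0$, and $t\mapsto \|\cdot\|$ is $\mathscr{C}^\infty$ away from the origin. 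Composing these gives the $\mathscr{C}^\infty$-G\^ateaux differentiability of $\mathcal{J}$, uniformly enough in the base point to conclude the claimed regularity class; this part is routine and I would state it briefly.

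The substance is the formula for $d\mathcal{J}[r_0,\xi]$. Here I would use Proposition~\ref{curve}(i): it suffices to compute $\tfrac{d}{dt}_{|t=0} J_{r_0+t\xi}$ as a derivative of a real-variable function, via the standard Jacobi-type identities $\tfrac{d}{dt}\det M(t) = \det M(t)\,\Tr\!\big(M(t)^{-1}\dot M(t)\big)$ and $\tfrac{d}{dt}\|w(t)\| = \|w(t)\|^{-1}\,w(t)\cdot\dot w(t)$, applied with $M(t) = \Id + \D(r_0+t\xi)_{|\Gamma}$ so that $\dot M(0) = \D\xi_{|\Gamma}$. Writing $J_{r_0} = \|w_{r_0}\|$ and using $w_{r_0} = \det(\Id+\D r_{0|\Gamma})\,\transposee{(\Id+\D r_{0|\Gamma})^{-1}}\nn$, the differentiation produces a sum of two terms — one from the determinant, one from the inverse-transpose factor — which must then be recognized as $J_{r_0}$ times the pullback of the surface divergence on $\Gamma_{r_0}$ of the pushed-forward field $\tau_{r_0}^{-1}\xi$.

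The main obstacle is precisely this last recognition step: converting the intrinsic-looking combination of $\D r_0$, $\D\xi$ and $\nn$ into $J_{r_0}\,\tau_{r_0}\!\big(\Div_{\Gamma_{r_0}}(\tau_{r_0}^{-1}\xi)\big)$. The clean way to handle it is to verify the identity first at $r_0 = 0$, where it reduces to the well-known formula $d\mathcal{J}[0,\xi] = \Div_\Gamma\xi$ (see Henrot--Pierre \cite{PierreHenrot}), obtained by noting that on $\Gamma$ the tangential part of $\D\xi$ contributes $\Div_\Gamma\xi$ while the normal-normal entry $\nn\cdot\tfrac{\partial\xi}{\partial\nn}$ is cancelled by the derivative of the inverse-transpose term acting in the normal direction — exactly the structure of the surface-divergence definition $\Div_\Gamma\uu = \Div\tilde\uu_{|\Gamma} - \nn\cdot\tfrac{\partial\uu}{\partial\nn}$ recorded above. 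The general $r_0$ case then follows from the $r_0 = 0$ case by the substitution principle of Remark~\ref{curve}'s preceding remark: $\Gamma_{r_0}$ is itself a smooth hypersurface, $\mathcal{J}$ relative to the base surface $\Gamma_{r_0}$ has first derivative $\Div_{\Gamma_{r_0}}(\,\cdot\,)$ at $0$, and the chain rule together with $\mathcal{F}_{\Gamma}(r) = \mathcal{F}_{\Gamma_{r_0}}\big((r-r_0)\circ(\Id+r_0)^{-1}\big)$ — more precisely the identity $J_{r}/J_{r_0}$ equals the pullback under $\Id+r_0$ of the Jacobian of $\Id + (r-r_0)\circ(\Id+r_0)^{-1}$ on $\Gamma_{r_0}$ — transports the formula to a neighborhood of $r_0$, yielding the stated expression. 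I would carry out the $r_0=0$ computation in coordinates (or invariantly using the two Jacobi identities) and then invoke this transport argument rather than differentiating the general determinant-and-inverse expression head-on.
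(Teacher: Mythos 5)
Your proposal is correct, but it reaches the formula by a genuinely different route than the paper. The paper never invokes Jacobi's formula nor a base-change argument: it works directly at a general $r_{0}$, representing $w_{r}$ in local coordinates as a normalized exterior product $\bigwedge_{i=1}^{d-1}e_{i}(r,x)$ of the pushed-forward tangent vectors $e_{i}(r,x)=[(\Id+\D r)(x)]e_{i}(x)$, rewriting $[\D\xi]e_{i}(x)=[\tau_{r_{0}}\D_{\Gamma_{r_{0}}}(\tau_{r_{0}}^{-1}\xi)]e_{i}(r_{0},x)$, and using the algebraic identity $\sum_{i}e_{1}\wedge\cdots\wedge Ae_{i}\wedge\cdots\wedge e_{d-1}=(\Tr(A)\Id-\transposee{A})\bigwedge_{i}e_{i}$; this produces at once the pulled-back surface divergence at general $r_{0}$ and, as a by-product, the closed-form higher-order derivatives $(\#)$ of $\mathcal{W}$ (vanishing for orders $\ge d$), which the paper reuses for Lemma \ref{N} and for the second-order formulas. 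Your scheme --- routine smoothness by composition, the explicit computation only at $r_{0}=0$ (which you carry out correctly: the $\det$ term gives $\Div\tilde\xi$ and the inverse-transpose term removes $[\nabla\tilde\xi]\nn\cdot\nn$, yielding $\Div_{\Gamma}\xi$), then transport to general $r_{0}$ --- is sound, with one caveat you implicitly handle: the generic remark $d\mathcal{F}_{\Gamma}[r_{0};\xi]=d\mathcal{F}_{\Gamma_{r_{0}}}[0;\xi\circ(\Id+r_{0})^{-1}]$ is stated for functionals of the surface $\Gamma_{r}$ alone, and $J_{r}$ is not such a functional (it depends on the deformation, not just on $\Gamma_{r}$), so that remark cannot be cited as is; what saves the argument is precisely the multiplicativity identity you state, $J_{r}=J_{r_{0}}\cdot\tau_{r_{0}}\big(\Jac_{\Gamma_{r_{0}}}(\Id+(r-r_{0})\circ(\Id+r_{0})^{-1})\big)$, from which differentiation in $t$ at $r=r_{0}+t\xi$ and the $r_{0}=0$ case applied on the hypersurface $\Gamma_{r_{0}}$ give exactly $d\mathcal{J}[r_{0},\xi]=J_{r_{0}}\big(\tau_{r_{0}}\Div_{\Gamma_{r_{0}}}(\tau_{r_{0}}^{-1}\xi)\big)$. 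In short: your approach is leaner for the first derivative and exploits the known $r_{0}=0$ formula, while the paper's direct wedge-product computation buys the explicit all-order derivative formulas that the rest of the section depends on; if you adopt your route you would need a separate (e.g.\ recursive) argument to recover those.
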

\begin{proof} 
We just have to prove the $\mathscr{C}^{\infty}$-G\^ateaux differentiability of 
$$
 \mathcal{W}:B^{\infty}(0,\varepsilon)\ni r\mapsto w_{r}=\cof(\Id+\D r_{|\Gamma})\nn\in\mathscr{C}^{\infty}(\Gamma).
$$  
We use a local coordinate system. Assume that $\Gamma$ is parametrized by an atlas  $(\mathcal{O}_{i},\phi_{i})_{1\leq i\leq p}$  then $\Gamma_{r}$ can be parametrized by the atlas $(\mathcal{O}_{i},(\Id+r)\circ\phi_{i})_{1\leq i\leq p}$. For any $x\in\Gamma$, let us denote by $e_{1}(x),e_{2}(x),\hdots,e_{d-1}(x)$ a vector basis of the tangent plane to $\Gamma$ at $x$. A basis of the tangent plane to $\Gamma_{r}$ at $x+r(x)$ is then given by 
$$
 e_{i}(r,x)=[(\Id+\D r)(x)]e_{i}(x)\quad\text{for }i=1,\hdots,d-1.
$$ 
Notice that for $i=1,\hdots,d-1$ the mapping $B^{\infty}(0,\varepsilon)\ni r\mapsto e_{i}(r)\in\mathscr{C}^{\infty}(\Gamma,\R^d)$  is $\mathscr{C}^{\infty}$-G\^ateaux differentiable. Its first derivative is
$de_{i}[r_{0};\xi]=[\D\xi]e_{i}(r_{0})$, and higher order derivatives vanish.
We have 
$$
  w_{r}(x)=\dfrac{\bigwedge\limits_{i=1}^{d-1} e_{i}(r,x)}{\left|\bigwedge\limits_{i=1}^{d-1}e_{i}(x)\right|},
$$
where the wedge means the exterior product.
Since the mappings $r\mapsto e_{i}(r)$, for $i=1,\hdots,d-1$ are $\mathscr{C}^{\infty}$-G\^ateaux differentiable, by composition the mapping $W$ is, too.  We compute now the derivatives using formulas \eqref{GCm}-\eqref{Gsym}. Let $\xi\in\mathscr{C}^{\infty}(\Gamma,\R^d)$ and $t$ small enough. We have at $r_{0}\in B^{\infty}(0,\varepsilon)$

$$
\frac{\partial^m \mathcal{W}}{\partial r^m}[r_{0},\xi]=\frac{\partial^m}{\partial t^m}_{\Big| t=0}\,\frac{\bigwedge\limits_{i=1}^{d-1}(\Id+Dr_{0}+tD\xi)e_{i}(x)}{\left|\bigwedge\limits_{i=1}^{d-1}e_{i}(x)\right|}.
$$
To simplify this expression one notes that 
$$
 \begin{aligned}{}
 [\D\xi(x)]e_{i}(x)&=[\D\xi(x)][(\Id+\D r_{0})(x)]^{-1}[(\Id+\D r_{0})(x)]e_{i}(x)\\
 &=[\D\xi(x)][\D(\Id+ r_{0})^{-1}(x+r_{0}(x))][(\Id+\D r_{0})(x)]e_{i}(x)\\
 &=[\tau_{r_{0}}\D(\tau_{r_{0}}^{-1}\xi)(x)]e_{i}(r_{0},x)=[\tau_{r_{0}}\D_{\Gamma_{r_{0}}}(\tau_{r_{0}}^{-1}\xi)(x)]e_{i}(r_{0},x).
\end{aligned}
$$
Now given a $(d\times d)$ matrix $A$  we have 
$$
 \sum_{i=1}^{d-1}\cdots \wedge e_{i-1}\times A e_{i}\wedge e_{i+1}\wedge\cdots=(\Tr(A)\Id-\transposee{A})\bigwedge_{i=1}^{d-1}e_{i}.
$$
 Thus we have with $A=[\tau_{r_{0}}\D_{\Gamma_{r_{0}}}(\tau_{r_{0}}^{-1}\xi)]$ and $B_{0}=\Id$, $B_{1}(A)=\Tr(A)\Id-\transposee{A}$
$$
(\#)
\left\{\begin{array}{ccl}
 \mathcal{W}(r_{0})&=&J_{r_{0}}(\tau_{r_{0}}\nn_{r_{0}}),\\
 \dfrac{\partial \mathcal{W}}{\partial r}[r_{0},\xi]&=&J_{r_{0}}\Big(\big(\tau_{r_{0}}\Div_{\Gamma_{r_{0}}}(\tau_{r_{0}}^{-1}\xi)\big)\tau_{r_{0}}\nn_{r_{0}}\\
 &&\qquad -\big[\tau_{r_{0}}\nabla_{\Gamma_{r_{0}}}(\tau_{r_{0}}^{-1}\xi)\big]\tau_{r_{0}}\nn_{r_{0}}\Big)\\&=&[B_{1}(A)\xi]\mathcal{W}(r_{0}),\\
\dfrac{\partial^{m} \mathcal{W}}{\partial r^{m}}[r_{0},\xi]&=&[B_{m}(A)\xi]\mathcal{W}(r_{0})\\
&=&\sum\limits_{i=1}^{m}(-1)^{i+1}\dfrac{(m-1)!}{(m-i)!}[B_{1}(A^{i})B_{m-i}(A)\xi]\mathcal{W}(r_{0})\\
&&\text{ for }1\leq m\leq d-1\\
\dfrac{\partial^{m} \mathcal{W}}{\partial r^{m}}[r_{0},\xi]&\equiv&0\text{ for all }m\ge d.
\end{array}\right.
$$

It follows that 
$$
\begin{aligned}
\frac{\partial \mathcal{J}}{\partial r}[r_{0},\xi]&=\frac{1}{\|\mathcal{W}(r_{0})\|}\frac{\partial \mathcal{W}}{\partial r}[r_{0},\xi]\cdot \mathcal{W}(r_{0}) \\
&=\frac{\partial \mathcal{W}}{\partial r}[r_{0},\xi]\cdot\tau_{r_{0}}\nn_{r_{0}}=J_{r_{0}}\big(\tau_{r_{0}}\Div_{\Gamma_{r_{0}}}(\tau^{-1}_{r_{0}}\xi)\big).
\end{aligned}
$$

\end{proof}
From $(\#)$ we deduce  easily the G\^ateaux differentiability of $r\mapsto \tau_{r}\nn_{r}$. 
\begin{lemma} 
 \label{N} 
 The mapping $\mathcal{N}$ from $r\in B^{\infty}(0,\varepsilon)$ to $\tau_{r}\nn_{r}=\nn_{r}\circ(\Id+r)\in\mathscr{C}^{\infty}(\Gamma,\R^d)$ is $\mathscr{C}^{\infty}$-G\^ateaux-differentiable and its first derivative  at $r_{0}$ is  defined for $\xi\in\mathscr{C}^{\infty}(\Gamma,\R^d)$ by:  
$$
 \frac{\partial \mathcal{N}}{\partial r}[r_{0},\xi]=-\left[\tau_{r_{0}}\nabla_{\Gamma_{r_{0}}}(\tau_{r_{0}}^{-1}\xi)\right]\mathcal{N}(r_{0}).
$$ 
\end{lemma}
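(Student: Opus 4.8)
The plan is to reduce everything to the analysis of $\mathcal{W}$ that was already carried out in the proof of Lemma~\ref{J}. Recall that $\tau_{r}\nn_{r}=w_{r}/\|w_{r}\|=\mathcal{W}(r)/\|\mathcal{W}(r)\|$, so $\mathcal{N}=\mathcal{W}/\|\mathcal{W}\|$. From display $(\#)$ we already know that $\mathcal{W}:B^{\infty}(0,\varepsilon)\to\mathscr{C}^{\infty}(\Gamma,\R^d)$ is $\mathscr{C}^{\infty}$-G\^ateaux differentiable, with all derivatives $\tfrac{\partial^m\mathcal{W}}{\partial r^m}[r_{0},\xi]=[B_{m}(A)\xi]\mathcal{W}(r_{0})$ given explicitly (and vanishing for $m\ge d$). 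The only extra ingredient needed is that the nonlinear Nemytskii-type map $v\mapsto v/\|v\|$ is $\mathscr{C}^{\infty}$-G\^ateaux differentiable on the open subset of $\mathscr{C}^{\infty}(\Gamma,\R^d)$ where $v$ does not vanish; since $\mathcal{W}(r_{0})=J_{r_{0}}(\tau_{r_{0}}\nn_{r_{0}})$ never vanishes (as $J_{r_{0}}>0$ and $\tau_{r_{0}}\nn_{r_{0}}$ is a unit vector), this applies in a neighborhood of any $r_{0}\in B^{\infty}(0,\varepsilon)$. Then $\mathcal{N}$ is $\mathscr{C}^{\infty}$-G\^ateaux differentiable by the chain rule for $\mathscr{C}^m$-maps between Fr\'echet spaces, recalled at the end of Section~\ref{ShapeD}.

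For the formula for the first derivative, I would differentiate $\mathcal{N}(r)=\mathcal{W}(r)/\|\mathcal{W}(r)\|$ at $r_{0}$ using the quotient/chain rule:
$$
 \frac{\partial\mathcal{N}}{\partial r}[r_{0},\xi]
 =\frac{1}{\|\mathcal{W}(r_{0})\|}\frac{\partial\mathcal{W}}{\partial r}[r_{0},\xi]
 -\frac{\mathcal{W}(r_{0})}{\|\mathcal{W}(r_{0})\|^{3}}\Big(\frac{\partial\mathcal{W}}{\partial r}[r_{0},\xi]\cdot\mathcal{W}(r_{0})\Big).
$$
Now substitute $\mathcal{W}(r_{0})=J_{r_{0}}(\tau_{r_{0}}\nn_{r_{0}})$, so that $\|\mathcal{W}(r_{0})\|=J_{r_{0}}$ and $\mathcal{W}(r_{0})/\|\mathcal{W}(r_{0})\|=\tau_{r_{0}}\nn_{r_{0}}$, together with the expression for $\tfrac{\partial\mathcal{W}}{\partial r}[r_{0},\xi]$ from $(\#)$, namely
$$
 \frac{\partial\mathcal{W}}{\partial r}[r_{0},\xi]
 =J_{r_{0}}\Big(\big(\tau_{r_{0}}\Div_{\Gamma_{r_{0}}}(\tau_{r_{0}}^{-1}\xi)\big)\,\tau_{r_{0}}\nn_{r_{0}}
 -\big[\tau_{r_{0}}\nabla_{\Gamma_{r_{0}}}(\tau_{r_{0}}^{-1}\xi)\big]\tau_{r_{0}}\nn_{r_{0}}\Big).
$$
The first term on the right-hand side is parallel to $\tau_{r_{0}}\nn_{r_{0}}$, so in the combination above it is exactly cancelled by the second (projection) term once one uses that $\tau_{r_{0}}\nn_{r_{0}}$ is a unit vector. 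What remains is
$$
 \frac{\partial\mathcal{N}}{\partial r}[r_{0},\xi]
 =-\big[\tau_{r_{0}}\nabla_{\Gamma_{r_{0}}}(\tau_{r_{0}}^{-1}\xi)\big]\tau_{r_{0}}\nn_{r_{0}}
 +\big(\big[\tau_{r_{0}}\nabla_{\Gamma_{r_{0}}}(\tau_{r_{0}}^{-1}\xi)\big]\tau_{r_{0}}\nn_{r_{0}}\cdot\tau_{r_{0}}\nn_{r_{0}}\big)\,\tau_{r_{0}}\nn_{r_{0}},
$$
i.e. $\tfrac{\partial\mathcal{N}}{\partial r}[r_{0},\xi]=-P\big([\tau_{r_{0}}\nabla_{\Gamma_{r_{0}}}(\tau_{r_{0}}^{-1}\xi)]\,\mathcal{N}(r_{0})\big)$ where $P$ is the tangential projection $I-\mathcal{N}(r_{0})\otimes\mathcal{N}(r_{0})$ on $\Gamma_{r_{0}}$. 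Finally I would observe that for a tangential field $v$ on $\Gamma_{r_{0}}$ one has $[\nabla_{\Gamma_{r_{0}}}(\tau_{r_{0}}^{-1}\xi)]\,\nn_{r_{0}}$ already tangential — equivalently $\nn_{r_{0}}\cdot[\nabla_{\Gamma_{r_{0}}}(\tau_{r_{0}}^{-1}\xi)]\nn_{r_{0}}=0$ because $[\D_{\Gamma_{r_{0}}}v]$ applied to $\nn_{r_{0}}$ has no normal component, by the very definition of the surface gradient (its rows are tangential). Hence $P$ acts as the identity here and the formula collapses to the claimed
$$
 \frac{\partial\mathcal{N}}{\partial r}[r_{0},\xi]=-\big[\tau_{r_{0}}\nabla_{\Gamma_{r_{0}}}(\tau_{r_{0}}^{-1}\xi)\big]\mathcal{N}(r_{0}).
$$

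The only genuinely delicate point is the last simplification, i.e. checking that the term coming from differentiating the normalization $v\mapsto v/\|v\|$ does not survive. This is where one must use carefully that $\tau_{r_{0}}\nn_{r_{0}}$ is a unit vector and that the matrix $[\tau_{r_{0}}\nabla_{\Gamma_{r_{0}}}(\tau_{r_{0}}^{-1}\xi)]$ has tangential columns (equivalently $[\D_{\Gamma_{r_{0}}}(\tau_{r_{0}}^{-1}\xi)]$ maps into the tangent plane row-wise), so that the scalar $\mathcal{N}(r_{0})\cdot[\tau_{r_{0}}\nabla_{\Gamma_{r_{0}}}(\tau_{r_{0}}^{-1}\xi)]\mathcal{N}(r_{0})$ vanishes — making the projection $P$ redundant and yielding the compact formula. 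Everything else is a routine application of the chain and product rules for $\mathscr{C}^{m}$-G\^ateaux differentiable maps between Fr\'echet spaces, combined with the already-established differentiability of $\mathcal{W}$ in $(\#)$; in particular higher order derivatives of $\mathcal{N}$ could be obtained by the same bootstrapping, though we only record the first one here.
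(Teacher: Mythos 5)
Your proposal is correct and follows essentially the same route as the paper: differentiate $\mathcal{N}=\mathcal{W}/\|\mathcal{W}\|$ via the quotient rule, insert the expression for $\partial_{r}\mathcal{W}$ from $(\#)$, and observe that the normal (divergence) part is removed by the normalization term. In fact you make explicit the step the paper leaves implicit, namely that $\mathcal{N}(r_{0})\cdot[\tau_{r_{0}}\nabla_{\Gamma_{r_{0}}}(\tau_{r_{0}}^{-1}\xi)]\mathcal{N}(r_{0})=0$ because the columns of the tangential gradient matrix are tangent vectors (your aside about ``tangential $v$'' is unnecessary, since this holds for any $\xi$).
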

\begin{proof} Using the preceding proof, we find
$$
\begin{array}{ccl}
\!\!\!\dfrac{\partial \mathcal{N}}{\partial r}[r_{0},\xi]&=& \dfrac{1}{\|\mathcal{W}(r_{0})\|}\dfrac{\partial \mathcal{W}}{\partial r}[r_{0},\xi]-\dfrac{1}{\|\mathcal{W}(r_{0})\|^3}\left(\dfrac{\partial \mathcal{W}}{\partial r}[r_{0},\xi]\cdot \mathcal{W}(r_{0})\right) \mathcal{W}(r_{0})\vspace{2mm}\\
&=&J_{r_{0}}^{-1}\left(\dfrac{\partial \mathcal{W}}{\partial r}[r_{0},\xi]-\left(\dfrac{\partial \mathcal{W}}{\partial r}[r_{0},\xi]\cdot(\tau_{r_{0}}\nn_{r_{0}})\right)\right)\tau_{r_{0}}\nn_{r_{0}}\vspace{2mm}\\
&=&-\left[\tau_{r_{0}}\nabla_{\Gamma_{r_{0}}}(\tau_{r_{0}}^{-1}\xi)\right]\tau_{r_{0}}\nn_{r_{0}}.
\end{array}
$$
\end{proof}
To obtain higher order shape derivatives of these mappings one can use the equalities $(\#)$ and 
$$
(*)\left\{\begin{array}{ccl}\|\tau_{r}\nn_{r}\|&\equiv&1,\vspace{2mm}\\
\dfrac{\partial^m \mathcal{N}\cdot \mathcal{N}}{\partial r^m}[r_{0},\xi]&\equiv&0\text{ for all }m\ge1.\end{array}\right.
$$
For example, we have at $r=0$ in the direction $\xi\in\mathscr{C}^{\infty}(\Gamma,\R^d)$:
$$
\frac{\partial \mathcal{J}}{\partial r}[0,\xi]=\Div_{\Gamma}\xi \text{ and }\frac{\partial \mathcal{N}}{\partial r}[0,\xi]=-[\nabla_{\Gamma}\xi]\nn.
$$
Using Proposition \ref{curve}, we obtain
$$
\frac{\partial^2 \mathcal{J}}{\partial r^2}[0,\xi_{1},\xi_{2}]=-\Tr([\nabla_{\Gamma}\xi_{2}][\nabla_{\Gamma}\xi_{1}])+\Div_{\Gamma}\xi_{1}\cdot \Div_{\Gamma}\xi_{2}+\left([\nabla_{\Gamma}\xi_{1}]\nn\cdot[\nabla_{\Gamma}\xi_{2}]\nn\right).
$$
Notice that $\Tr([\nabla_{\Gamma}\xi_{2}][\nabla_{\Gamma}\xi_{1}])=\Tr([\nabla_{\Gamma}\xi_{1}][\nabla_{\Gamma}\xi_{2}])$.
$$
 \frac{\partial^2 \mathcal{N}}{\partial r^2}[0,\xi_{1},\xi_{2}]=[\nabla_{\Gamma}\xi_{2}][\nabla_{\Gamma}\xi_{1}]\nn+[\nabla_{\Gamma}\xi_{1}][\nabla_{\Gamma}\xi_{2}]\nn-\left([\nabla_{\Gamma}\xi_{1}]\nn\cdot[\nabla_{\Gamma}\xi_{2}]\nn\right)\nn.
$$
 In the last section we give a second method to obtain higher order derivatives using the G\^ateaux derivatives of the surface differential operators.

\begin{remark} The computation of the derivatives does  not require more than the first derivative of the deformations $\xi$. As a consequence for hypersurfaces of class $\mathscr{C}^{k+1}$, it suffices to consider deformations of class $\mathscr{C}^{k+1}$ to conserve the regularity $\mathscr{C}^k$ of the Jacobian and of the normal vector by differentiation.\end{remark}

\subsection{G\^ateaux differentiability of pseudo-homogeneous kernels}
  
   The following theorem  establishes sufficient conditions for the G\^ateaux differentiability  of the boundary integral operators described above.  
  \begin{theorem}\label{PDF} Let $p\in\N$. We set $(\Gamma\times\Gamma)^*=\left\{(x,y)\in\Gamma\times\Gamma; \;x\not=y\right\}$.
  Assume that the following two conditions are satisfied:
  
  1) For all fixed $(x, y)\in(\Gamma\times\Gamma)^*$ the function
$$
\begin{array}{cccl}f:&B^{\infty}(0,\varepsilon)&\rightarrow&\C\\
  &r&\mapsto& k_{r}(y+r(y),x+r(x)-y-r(y))J_{r}(y)
\end{array}
$$
is $\mathscr{C}^{p+1}$-G\^ateaux differentiable.

2) The functions $(y,x-y)\mapsto f(r_{0})(y,x-y)$ and $$(y,x-y)\mapsto d^l f[r_{0},\xi_{1},\hdots,\xi_{l}](y,x-y)$$ are pseudo-homogeneous of class $-m$ for all $r_{0}\in B^{\infty}(0,\varepsilon)$, for all $l=1,\hdots,p+1$ and for all $\xi_{1},\hdots,\xi_{p+1}\in\mathscr{C}^{\infty}(\Gamma,\R^d)$.

Then for any $s\in\R$ the mapping 
$$
\begin{array}{ccl}B^{\infty}(0,\varepsilon)&\rightarrow &\mathscr{L}(H^{s}(\Gamma), H^{s+m}(\Gamma))\\
r&\mapsto&\tau_{r}\mathcal{K}_{\Gamma_{r}}\tau_{r}^{-1}
\end{array}
$$ 
is $\mathscr{C}^{p}$-G\^ateaux differentiable and
\begin{equation*}d^p\left(\tau_{r}\mathcal{K}_{\Gamma_{r}}\tau_{r}^{-1}\right)[r_{0},\xi_{1},\hdots,\xi_{p}]u(x)=\int_{\Gamma}d^pf[r_{0},\xi_{1},\hdots,\xi_{p}](y,x-y)u(y)d s(y).
\end{equation*}
\end{theorem}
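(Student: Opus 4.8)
The plan is to reduce the differentiability of the operator-valued map $r\mapsto\tau_r\mathcal K_{\Gamma_r}\tau_r^{-1}$ to the differentiability of its kernel, which is exactly hypothesis 1), and then to control the operator norm of the difference quotients by invoking the continuity statement of the regularity theorem (the first displayed theorem after Example~\ref{exelastic}) applied to the kernels that appear as remainders. First I would fix $s\in\R$ and $r_0\in B^\infty(0,\varepsilon)$, and for $\xi\in\mathscr C^\infty(\Gamma,\R^d)$ and $t$ small write, using \eqref{boundr},
\begin{equation*}
\big(\tau_{r_0+t\xi}\mathcal K_{\Gamma_{r_0+t\xi}}\tau_{r_0+t\xi}^{-1}\big)u(x)-\big(\tau_{r_0}\mathcal K_{\Gamma_{r_0}}\tau_{r_0}^{-1}\big)u(x)
=\int_\Gamma\big(f(r_0+t\xi)-f(r_0)\big)(y,x-y)\,u(y)\,ds(y).
\end{equation*}
By hypothesis 1), for each fixed $(x,y)\in(\Gamma\times\Gamma)^*$ the pointwise difference quotient $\tfrac1t\big(f(r_0+t\xi)-f(r_0)\big)$ converges to $df[r_0;\xi](y,x-y)$; the candidate derivative is therefore the integral operator with kernel $df[r_0;\xi]$, which by hypothesis 2) (with $l=1$) is pseudo-homogeneous of class $-m$, hence by the regularity theorem defines a bounded operator $H^s(\Gamma)\to H^{s+m}(\Gamma)$. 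The substance of the proof is to upgrade this pointwise convergence of kernels to convergence in operator norm.

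The key device is the Taylor expansion with integral remainder from Proposition~\ref{curve}'s neighbour (the Taylor proposition): applied to the scalar-valued (for fixed $(x,y)$) map $t\mapsto f(r_0+t\xi)(y,x-y)$, it gives
\begin{equation*}
\tfrac1t\big(f(r_0+t\xi)-f(r_0)\big)(y,x-y)-df[r_0;\xi](y,x-y)
=\int_0^1(1-\lambda)\,\tfrac{\partial^2}{\partial r^2}f[r_0+\lambda t\xi;\xi](y,x-y)\,d\lambda,
\end{equation*}
so the error kernel is an average over $\lambda$ of the kernels $\partial^2 f/\partial r^2[r_0+\lambda t\xi;\xi]$, each of which is pseudo-homogeneous of class $-m$ by hypothesis 2). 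One then needs that the operator norm of the integral operator attached to this family of kernels is bounded uniformly in $\lambda\in[0,1]$ and tends to $0$ as $t\to0$. Uniform boundedness follows because the constants $C_\alpha$ in the homogeneity estimates, and the $\mathscr C^\infty(\Gamma)$-norms of the coefficient functions $b^\ell_{m+j}$ in the expansion \eqref{(dev)}, depend continuously on the parameter $r_0+\lambda t\xi$, which ranges over a compact set; the decay to $0$ comes from the continuity in $r$ of the second derivative together with the fact that $f(r_0;\cdot)$ already supplies the leading behaviour, so the remainder carries an explicit factor of $t$. I would isolate this as a separate lemma: \emph{if a family of pseudo-homogeneous kernels of class $-m$ depends continuously (in the sense of the defining data: the $C_\alpha$ and the $b^\ell_{m+j}$) on a parameter in a compact set, the associated operators are uniformly bounded $H^s\to H^{s+m}$}; this is really just bookkeeping on the proof of the regularity theorem in \cite{Nedelec,Taylor2}. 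Granting it, the displayed remainder formula shows $\big\|\tfrac1t(\tau_\cdot\mathcal K\tau_\cdot^{-1})\text{-difference}-d(\cdots)[r_0;\xi]\big\|_{\mathscr L(H^s,H^{s+m})}=O(t)$, which is the existence of the G\^ateaux semiderivative in the operator-norm topology.

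With the first semiderivative in hand, linearity and continuity of $\xi\mapsto d(\tau_r\mathcal K_{\Gamma_r}\tau_r^{-1})[r_0;\xi]$ follow from the corresponding properties of $\xi\mapsto df[r_0;\xi]$ (hypothesis 1)) and from the same uniform-boundedness lemma, which converts $\mathscr C^\infty(\Gamma)$-continuity of the kernel data into $\mathscr L(H^s,H^{s+m})$-continuity of the operators. For the higher-order statement one proceeds by induction on $p$: assuming $r\mapsto\tau_r\mathcal K_{\Gamma_r}\tau_r^{-1}$ is $\mathscr C^{p-1}$-G\^ateaux differentiable with $d^{p-1}(\tau_r\mathcal K_{\Gamma_r}\tau_r^{-1})[r_0;\xi_1,\dots,\xi_{p-1}]$ equal to the integral operator with kernel $d^{p-1}f[r_0;\xi_1,\dots,\xi_{p-1}]$, one differentiates once more exactly as above, using hypothesis 1) at order $p+1$ and hypothesis 2) at levels $l=p,p+1$ to know that both the $p$-th derivative kernel and the $(p+1)$-st remainder kernels are pseudo-homogeneous of class $-m$. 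The symmetrization formula \eqref{Gsym} then upgrades the ``diagonal'' semiderivatives $\partial^p/\partial r^p$ to the full multilinear $d^p$, and the stated formula for $d^p(\tau_r\mathcal K_{\Gamma_r}\tau_r^{-1})[r_0;\xi_1,\dots,\xi_p]u(x)$ drops out. I expect the main obstacle to be precisely the uniform-boundedness lemma: verifying that the operator-norm bound in the Nédélec/Taylor regularity theorem is \emph{locally uniform} in the kernel, i.e.\ controlled by finitely many of the constants $C_\alpha$ and finitely many seminorms of the $b^\ell_{m+j}$, with no hidden dependence — everything else is a routine application of the Fréchet-space Taylor formula and the chain and product rules already quoted.
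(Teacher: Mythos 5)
Your proposal follows essentially the same route as the paper's proof: Taylor expansion with integral remainder applied to the kernel $f$, interchange of the $\lambda$-integral and the surface integral, a uniform (in $\lambda$ and $t$) $H^{s}\to H^{s+m}$ bound for the class $-m$ remainder kernels, passage to the operator-norm limit, and higher orders by repeating the argument with $d^{p}f$ in place of $f$ together with the symmetrization formula \eqref{Gsym}. The uniform-boundedness statement you isolate as a separate lemma is precisely the step the paper handles tersely via its $\sup_{\lambda\in[0,1]}$ estimate, so your write-up merely makes explicit what the paper leaves implicit.
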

\begin{proof}
We  use the  linearity of the integral and Taylor expansion with integral remainder. We do the proof  for $p=1$ only. Let $r_{0}\in B^{\infty}(0,\varepsilon)$, $\xi\in\mathscr{C}^{\infty}(\Gamma,\R^d)$ and $t$ small enough such that $r_{0}+t\xi\in B^{\infty}(0,\varepsilon)$. We have
\begin{equation*}
\begin{split}
 f(r_{0}+t\xi,x,y)-f(r_{0},y,x-y)&=t\frac{\partial f}{\partial r}[r_{0},\xi](y,x-y)\\
 &
 +t^2\int_{0}^1(1-\lambda)\frac{\partial^2 f}{\partial r^2}[r_{0}+\lambda t\xi,\xi](y,x-y)d\lambda.
\end{split}
\end{equation*}
We have to verify that each term in this equality is a kernel of an operator mapping $H^s(\Gamma)$ to  $H^{s+m}(\Gamma)$.  The two first terms in the left hand side are pseudo-homogeneous kernels of class  $-m$ and by hypothesis  $\dfrac{\partial f}{\partial r}[r_{0},\xi]$ is also a kernel of class $-m$. It remains to prove that  the operator with kernel 
$$
 (x,y)\mapsto \int_{0}^1(1-\lambda)\frac{\partial^2 f}{\partial r^2}[r_{0}+\lambda t\xi,\xi](x,y) d\lambda
$$ 
acts from $H^s(\Gamma)$ to $H^{s+m}(\Gamma)$ with norm bounded uniformly in $t$. 
Since $\dfrac{\partial^2 f}{\partial r^2}[r_{0}+\lambda t\xi,\xi]$ is pseudo-homogeneous of class  $-m$ for all $\lambda\in[0,1]$, it suffices to  use Lebesgue's theorem in order to invert the integration with respect to the variable $\lambda$ and the  integration with respect to $y$ on $\Gamma$. 
$$
\begin{array}{ll}
  &\null\hskip-2em\displaystyle{\left\|\int_{\Gamma}\left(\int_{0}^1(1-\lambda)\frac{\partial^2 f}{\partial r^2}[r_{0}+\lambda t\xi,\xi](x,y)d\lambda\right)u(y)d s(y)\right\|_{H^{s+m}(\Gamma)}}\\
  =&\displaystyle{\left\|\int_{0}^1(1-\lambda)\left(\int_{\Gamma}\frac{\partial^2 f}{\partial r^2}[r_{0}+\lambda t\xi,\xi](x,y)u(y)d s(y)\right)d\lambda\right\|_{H^{s+m}(\Gamma)}}\\
  \leq&\sup_{\lambda\in[0,1]}\displaystyle{\left\|\left(\int_{\Gamma}\frac{\partial^2 f}{\partial r^2}[r_{0}+\lambda t\xi,\xi](x,y)u(y)d s(y)\right)\right\|_{H^{s+m}(\Gamma)}}\\
  \leq&C\|u\|_{H^s(\Gamma)}.
\end{array}
$$
We then have 
\begin{equation*}
 \begin{split}
 \dfrac{1}{t}\bigg(\int_{\Gamma}f(r_{0}&+t\xi,x,y)u(y)\,d s(y)-\int_{\Gamma}f(r_{0},x,y)u(y)\,d s(y) \bigg) \\
 &=\int_{\Gamma}\frac{\partial f}{\partial r}[r_{0},\xi](x,y)u(y)d s(y)\\
 &+t\int_{\Gamma}\left(\int_{0}^1(1-\lambda)\frac{\partial^2 f}{\partial r^2}[r_{0}+\lambda t\xi,\xi](x,y)d\lambda\right)u(y)\,d s(y).
 \end{split}
\end{equation*}
We pass to the operator norm limit $t\to0$ and  we obtain the first G\^ateaux derivative. For higher order derivatives it suffices to write the proof with $d^pf[r_{0},\xi_{1},\hdots,\xi_{k}]$ instead of $f$. The linearity, the symmetry and the continuity of the first derivative are deduced from the corresponding properties of the derivatives of the kernel.
\end{proof}

Now we will consider some particular classes of pseudo-homogeneous kernels.
\begin{corollary}\label{corPDF} Assume that the kernels $k_{r}$ are of the form
$$k_{r}(y_{r},x_{r}-y_{r})=G(x_{r}-y_{r})$$
where $G\in\mathscr{C}^{\infty}(\R^d\setminus\{0\})$ is a pseudo-homogeneous kernel of class $-m$, $m\in\N$, which does not depend on $r$. Then the mapping 
$$
\begin{array}{ccl}B^{\infty}(0,\varepsilon)&\rightarrow &\mathscr{L}(H^{t}(\Gamma), H^{t+m}(\Gamma))\\
r&\mapsto&\tau_{r}\mathcal{K}_{\Gamma_{r}}\tau_{r}^{-1}
\end{array}
$$ 
is $\mathscr{C}^{\infty}$-G\^ateaux differentiable and
 the kernel of the first derivative at $r=0$ is defined for $\xi\in\mathscr{C}^{\infty}(\Gamma,\R^d)$ by
\begin{equation*}
  \begin{split} 
  df[0,\xi]=(\xi(x)-\xi(y))\cdot\nabla G(x-y)+G(x-y)\Div_{\Gamma}\xi(y).
  \end{split}
\end{equation*}  
\end{corollary}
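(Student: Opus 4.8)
The plan is to apply Theorem~\ref{PDF} with $p$ arbitrary, so the main task reduces to verifying its two hypotheses for the specific kernel $k_r(y_r,x_r-y_r)=G(x_r-y_r)$ and, separately, computing the first derivative explicitly. First I would fix $(x,y)\in(\Gamma\times\Gamma)^*$ and examine the function
$$
f(r)=G\big(x+r(x)-y-r(y)\big)\,J_r(y).
$$
The factor $J_r(y)$ is $\mathscr{C}^\infty$-G\^ateaux differentiable by Lemma~\ref{J}. The factor $r\mapsto x+r(x)-y-r(y)$ is affine in $r$ with values in $\R^d\setminus\{0\}$ for $r$ small (since $x\ne y$ and $r$ is Lipschitz-small), so $r\mapsto G(x+r(x)-y-r(y))$ is a composition of the smooth map $G$ on $\R^d\setminus\{0\}$ with an affine map, hence $\mathscr{C}^\infty$-G\^ateaux differentiable; by the product rule $f$ is $\mathscr{C}^\infty$-G\^ateaux differentiable, giving condition~1) of Theorem~\ref{PDF} for every $p$.

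For condition~2), I would show that each G\^ateaux derivative $d^l f[r_0,\xi_1,\dots,\xi_l]$, viewed as a function of $(y,z)$ with $z=x-y$, is again pseudo-homogeneous of class $-m$. Differentiating the composition produces terms of the form $(\partial^\alpha G)(x_{r_0}-y_{r_0})$ times products of the constant-in-the-point-but-smooth-in-$y$ quantities $(\xi_i(x)-\xi_i(y))$ and derivatives of $J_{r_0}(y)$; the key point is that differentiating the argument $x_{r_0}-y_{r_0}$ once with respect to $r$ in direction $\xi$ yields $\xi(x)-\xi(y)$, which near $z=0$ behaves like $\nabla_\Gamma$ applied to $\xi$ evaluated on $\Gamma$ plus an $O(|z|)$ remainder, so the order of homogeneity is not decreased. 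Using the asymptotic expansion of $G$ into homogeneous pieces $G_{m+j}^\ell$ and Taylor-expanding the smooth coefficients in $z$ at $z=0$, each resulting term fits the form \eqref{(dev)}; hence $d^l f[r_0,\xi_1,\dots,\xi_l]$ is pseudo-homogeneous of class $-m$. This is the step I expect to require the most care: one must track that no term arising from the chain rule lowers the homogeneity class, and that the dependence on $y$ remains $\mathscr{C}^\infty$ after the change of variables. Once conditions~1) and~2) hold for all $p$, Theorem~\ref{PDF} yields $\mathscr{C}^p$-G\^ateaux differentiability for every $p$, i.e.\ $\mathscr{C}^\infty$-G\^ateaux differentiability.

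Finally I would compute $df[0,\xi]$ directly from the product rule at $r_0=0$, where $J_0\equiv1$:
$$
df[0,\xi](x,y)=\frac{d}{dt}_{\big|t=0}G\big(x+t\xi(x)-y-t\xi(y)\big)
 + G(x-y)\,\frac{\partial\mathcal J}{\partial r}[0,\xi](y).
$$
The first term equals $(\xi(x)-\xi(y))\cdot\nabla G(x-y)$ by the chain rule, and the second equals $G(x-y)\Div_\Gamma\xi(y)$ by the formula $\frac{\partial\mathcal J}{\partial r}[0,\xi]=\Div_\Gamma\xi$ established after Lemma~\ref{N}. This gives the stated expression for $df[0,\xi]$, and by Theorem~\ref{PDF} the operator $d(\tau_r\mathcal K_{\Gamma_r}\tau_r^{-1})[0,\xi]$ has precisely this kernel.
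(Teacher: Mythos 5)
Your proposal is correct and follows essentially the same route as the paper: verify the two hypotheses of Theorem~\ref{PDF} by combining Lemma~\ref{J} with the smoothness of $G$ away from the origin for pointwise $\mathscr{C}^{\infty}$-G\^ateaux differentiability, then show the derivatives remain pseudo-homogeneous of class $-m$ because each factor $\xi(x)-\xi(y)$ contributes a power of $|x-y|$ that compensates the extra derivative on the homogeneous pieces of $G$, and finally compute $df[0,\xi]$ by the product rule using $\frac{\partial\mathcal J}{\partial r}[0,\xi]=\Div_{\Gamma}\xi$. The paper organizes the second step via the Leibniz formula and a small auxiliary lemma on $\D^{l}G_{m}$, but the underlying argument is the same as yours.
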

\begin{proof}  For  fixed $(x, y)\in(\Gamma\times\Gamma)^*$, consider the mapping 
$$
 f: B^{\infty}(0,\varepsilon)\ni r \mapsto f(r,x,y)=G(x+r(x)-y-r(y))J_{r}(y)\in\C.
$$ 
By Theorem \ref{PDF} we have to prove that $r\mapsto f(r)$ is $\mathscr{C}^{\infty}$-G\^ateaux differentiable and that each derivative defines a pseudo-homogeneous kernel of class $-m$.

 \noindent$\rhd$\underline{Step 1:}\newline
 First we prove that for fixed $(x,y)\in(\Gamma\times\Gamma)^*$ the mapping $r\mapsto f(r,x,y)$ is infinitely G\^ateaux differentiable on $B^{\infty}(0,\varepsilon)$. By  Lemma \ref{J} the mapping $r\mapsto J_{r}(y)$ is infinitely G\^ateaux differentiable on $B^{\infty}(0,\varepsilon)$, the mapping $r\mapsto x+r(x)$ is also infinitely G\^ateaux differentiable on $B^{\infty}(0,\varepsilon)$ and the kernel  $G$ is of class $\mathscr{C}^{\infty}$ on $\R^d\setminus\{0\}$. Being composed of  infinitely G\^ateaux differentiable maps,  the mapping $r\mapsto f(r,x,y)$ is,  too.
  
\noindent $\rhd$\underline{Step 2:}\newline
 We then prove that each derivative defines a  pseudo-homogeneous kernel of class $-m$,  that is to say that for all $p\in\N$ and for any $p$-tuple $(\xi_{1},\hdots,\xi_{p})$ the function
$$ 
 (x,y)\mapsto d^pf[r_{0},\xi_{1},\hdots,\xi_{p}](x,y)
$$ 
is pseudo-homogeneous of class $-m$. 
By formula \eqref{Gsym}, it remains to write the proof for the function $\dfrac{\partial^p}{\partial r^p}f[r_{0},\xi]$ with $\xi\in\mathscr{C}^{\infty}(\Gamma,\R^d)$. The Leibniz  formula gives
$$
  \dfrac{\partial^p}{\partial r^p}f[r_{0},\xi](x,y)
  =
  \sum\limits_{l=0}^p\!\binom{p}{l}\dfrac{\partial^l}{\partial r^l}
  \!\left\{G(x+r(x)-y-r(y))\right\}\![r_{0},\xi] \dfrac{\partial^{p-l} \mathcal{J}}{\partial r^{p-l}}[r_{0},\xi](y).
$$
 Since  $\dfrac{\partial^{p-l} \mathcal{J}}{\partial r^{p-l}}[r_{0},\xi]\in\mathscr{C}^{\infty}(\Gamma,\R)$, we have to prove that  
$$
(x,y)\mapsto \frac{\partial^l}{\partial r^l}\left\{G(x+r(x)-y-r(y))\right\}[r_{0},\xi]
$$ 
defines a pseudo-homogeneous kernel of class  $-m$. We have
\begin{multline*}
\frac{\partial^l}{\partial r^l}\big\{G(x+r(x)-y-r(y))\big\}[r_{0};\xi]\\
=\D^{l}G[x+r_{0}(x)-y-r_{0}(y); \xi(x)-\xi(y),\hdots,\xi(x)-\xi(y)].
\end{multline*}
By definition, $G(z)$ admits the following asymptotic expansion when $z$ tends to zero:
\begin{equation}
\label{delG}
 G(z)=G_{m}(z)+\sum_{j=1}^{N-1}G_{m+j}(z)+G_{m+N}(z)
\end{equation} 
where $G_{m+j}$ is homogeneous of class $-(m+j)$ for $j=0,\hdots,N-1$ and $G_{m+N}$ is of arbitrary regularity. Using  Taylor expansion,  the following result is easy to see:
\begin{lemma} 
Let the kernel $G_{m}(z)$ be homogeneous of class $-m$ and $\xi\in\mathscr{C}^{\infty}(\Gamma,\R^d)$. Then the function 
$$
  (x,y-x)\mapsto D^{l}G_{m}[x+r_{0}(x)-y-r_{0}(y);\xi(x)-\xi(y),\hdots,\xi(x)-\xi(y)]
$$ 
is pseudo-homogeneous of class $-m$.
\end{lemma} 
By taking derivatives in the expansion \eqref{delG} we conclude that \\
$\dfrac{\partial^l}{\partial r^l}\left\{G(x+r(x)-y-r(y))\right\}[r_{0};\xi]$ is  pseudo-homogeneous of class $-m$ too. This ends the proof of the corollary.
\end{proof}

\begin{theorem}
\label{P'DF}
Let $s\in\R$. Let $G(z)$ be a  pseudo-homogeneous kernel of class $-(m+1)$ with $m\in\N$. Let us fix a compact subdomain $K_{p}$ of $\Omega$. Assume that for all  $r\in B^{\infty}(0,\varepsilon_{p})$, we have $k_{r}(y_{r},x-y_{r})=G(x-y_{r})$. Then the mapping
$$
\begin{array}{ccl}B_{\varepsilon}^{\infty}&\rightarrow&\mathscr{L}\left(H^{s-\frac{1}{2}}(\Gamma), H^{s+m}(K_{p})\right)\\r&\mapsto&\mathcal{P}_{r}\tau_{r}^{-1}
\end{array}
$$ 
is infinitely G\^ateaux differentiable and
\begin{multline*}
 d^p (\mathcal{P}_{r}\tau_{r}^{-1})[r_{0},\xi_{1},\hdots,\xi_{p}]u(x)\\=\int_{\Gamma}d^p\left\{G(x-y-r(y))J_{r}(y)\right\}[r_{0},\xi_{1},\hdots,\xi_{p}]u(y)d s(y).
\end{multline*} 
Its first derivative at $r=0$ in the  direction $\xi\in\mathscr{C}^{\infty}(\Gamma,\R^d)$ is the integral operator  denoted by $\mathcal{P}^{(1)}$ with kernel 
\begin{equation*}
 -\xi(y)\cdot\nabla^zG(x-y)+G(x-y)\Div_{\Gamma}\xi(y).
 \end{equation*}
 The operator $\mathcal{P}^{(1)}$ can be extended to a continuous linear operator from $H^{s-\frac{1}{2}}(\Gamma)$ to $H^{s+m}(\Omega)$ and $H_{loc}^{s+m}(\overline{\Omega^c}).$ 
\end{theorem}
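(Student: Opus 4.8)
The plan is to mimic the structure of the proof of Corollary~\ref{corPDF}, adapted to the potential setting described in case (iii) of the preceding discussion, and then to pass from the compact piece $K_p$ to the whole exterior and interior domains by exhaustion. First I would fix $(x,y)$ with $x\in K_p$ and $y\in\Gamma$, so that $x-y-r(y)$ stays away from zero for all $r\in B^\infty(0,\varepsilon_p)$ (this is the whole point of choosing $K_p$ with $K_p\subset\bigcap_{r}\Omega_r$), and observe that the scalar map $r\mapsto G(x-y-r(y))\,J_r(y)$ is a composition of the $\mathscr{C}^\infty$-G\^ateaux differentiable map $r\mapsto J_r(y)$ (Lemma~\ref{J}), the affine map $r\mapsto x-y-r(y)$, and the smooth function $G$ away from the origin; hence it is $\mathscr{C}^\infty$-G\^ateaux differentiable, with derivatives computed by the Leibniz and chain rules exactly as in Corollary~\ref{corPDF}. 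In particular the first derivative at $r=0$ in direction $\xi$ is $\D G[x-y;-\xi(y)]\,J_0(y) + G(x-y)\,d\mathcal{J}[0,\xi](y) = -\xi(y)\cdot\nabla^z G(x-y) + G(x-y)\Div_\Gamma\xi(y)$, using $J_0=1$ and $d\mathcal{J}[0,\xi]=\Div_\Gamma\xi$.

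Next I would establish the differentiability of $r\mapsto\mathcal{P}_r\tau_r^{-1}$ as an operator $H^{s-\frac12}(\Gamma)\to H^{s+m}(K_p)$. Here the key observation is that for $x$ ranging over the \emph{fixed} compact set $K_p$ and $y\in\Gamma$, the kernel $(x,y)\mapsto d^l\{G(x-y-r(y))J_r(y)\}[r_0,\xi_1,\dots,\xi_l]$ is smooth on $K_p\times\Gamma$ — there is no singularity to track, since $x$ never meets $\Gamma$. Consequently the associated integral operator maps $H^{s-\frac12}(\Gamma)$ continuously into $\mathscr{C}^\infty(K_p)\subset H^{s+m}(K_p)$, with operator norm controlled by finitely many sup-norms of the kernel and its $x$-derivatives over $K_p\times\Gamma$, which in turn are bounded uniformly for $r$ in a small ball (again because $|x-y-r(y)|$ is bounded below there). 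With these uniform bounds in hand, the Taylor-expansion-with-integral-remainder argument from the proof of Theorem~\ref{PDF} goes through verbatim: I write the difference quotient, isolate the linear term, and dominate the remainder integral by $\sup_{\lambda\in[0,1]}$ of the operator norm of the integral operator whose kernel is $\partial_r^2 f[r_0+\lambda t\xi,\xi]$, which is $O(1)$ uniformly in $t$; passing to the operator-norm limit $t\to0$ gives $\mathscr{C}^1$ differentiability, and iterating gives $\mathscr{C}^\infty$ with the stated formula for $d^p(\mathcal{P}_r\tau_r^{-1})$.

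Finally I would remove the restriction to $K_p$. Writing $\Omega=\bigcup_p K_p$ with the $\varepsilon_p$ as in case (iii), the differentiability on each $K_p$ with the same explicit kernel formula shows that the limit is independent of $p$, so $r\mapsto\mathcal{P}_r\tau_r^{-1}$ is $\mathscr{C}^\infty$-G\^ateaux differentiable into $H^{s+m}_{\loc}(\overline\Omega)$, and the analogous exhaustion of $\Omega^c$ by compact sets handles $H^{s+m}_{\loc}(\overline{\Omega^c})$. For the first derivative $\mathcal{P}^{(1)}$ at $r=0$, its kernel $-\xi(y)\cdot\nabla^z G(x-y)+G(x-y)\Div_\Gamma\xi(y)$ is, for $x\in\R^d\setminus\Gamma$, precisely of the pseudo-homogeneous potential form: $\nabla^z G$ is pseudo-homogeneous of class $-(m+1)$ shifted by one derivative, i.e.\ of class $-m$, and multiplication by the $\mathscr{C}^\infty(\Gamma)$ function $\xi(y)$ or $\Div_\Gamma\xi(y)$ preserves the class; so by the Eskin-type mapping theorem quoted before Section~\ref{ShapeD} (the second unnumbered Theorem), $\mathcal{P}^{(1)}$ extends continuously from $H^{s-\frac12}(\Gamma)$ to $H^{s+m}(\Omega)$ and to $H^{s+m}_{\loc}(\overline{\Omega^c})$.

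I expect the only real subtlety — everything else being a transcription of the two preceding proofs — to be bookkeeping the regularity indices: one must check that starting from $G$ of class $-(m+1)$, the loss of one order of smoothness incurred by the term $\xi(y)\cdot\nabla^z G(x-y)$ lands exactly in class $-m$, so that the gain of $\tfrac12$ from the potential mapping theorem produces the target space $H^{s+m}$ (hence the shift $H^{s-\frac12}(\Gamma)\to H^{s+m}$ in the statement rather than $H^{s}(\Gamma)\to H^{s+m+\frac12}$), and to confirm that this loss of regularity near $\Gamma$ — absent on each interior $K_p$ but genuine in the limit — is consistent with the ``$\loc$'' formulation and with the remark in the introduction that potential derivatives lose regularity near the boundary.
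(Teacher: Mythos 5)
Your proposal is correct and follows essentially the same route as the paper's (much terser) proof: smoothness of the kernel on the fixed compact $K_p$ away from $\Gamma$, the Taylor-remainder argument of Theorem~\ref{PDF}, exhaustion of $\Omega$ (and $\Omega^c$) by compacts, and the observation that the derivative kernel $-\xi(y)\cdot\nabla^z G(x-y)+G(x-y)\Div_\Gamma\xi(y)$ is pseudo-homogeneous of class $-m$, so the potential mapping theorem gives the extension $H^{s-\frac12}(\Gamma)\to H^{s+m}(\Omega)$ and $H^{s+m}_{\loc}(\overline{\Omega^c})$. You in fact supply more detail than the paper does (uniform operator-norm bounds on $K_p$, the class bookkeeping), and the only tiny imprecision is that the bound on $H^{s-\frac12}(\Gamma)$ densities requires sup-norms of $y$-derivatives of the kernel as well, which are equally available since $|x-y-r(y)|$ stays bounded below.
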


\begin{proof} The kernel and its higher order derivatives are of class  $\mathscr{C}^{\infty}$ on $K_{p}$. \newline 
Writing $\Omega$ as an increasing union of compact subsets, we can define a shape derivative on the whole domain $\Omega$.  Let us  look at the first derivative: The term $G(x-y)\Div_{\Gamma}\xi(y)$ has the same regularity as $G(x-y)$ when $x-y$ tends to zero wheareas  $\xi(y)\cdot\nabla G(x-y)$ loses one order of regularity.  As a consequence, since the kernel is of class $-(m+1)$, its first derivative acts from $H^{s-\frac{1}{2}}(\Gamma)$ to $H^{s+m}(\Omega)$ and $H^{s+m}_{loc}(\overline{\Omega^c})$. 
\end{proof}

\begin{remark} We conclude that the boundary integral operators are smooth with respect to the domain whereas the potential operators lose one order of regularity at each derivation. We point out that we do not need more than the first derivative of the deformations $\xi$ to compute the G\^ateaux derivatives of any order of these integral operators.
\end{remark}

\begin{example}(\textbf{Acoustic single layer potential})
\label{Vk}\label{psi} 
 Let $d=2$ or $d=3$ and $s\in\R$. 
 We denote by $\Psi^r_{\kappa}$ the single layer potential  defined for $u_{r}\in H^{s}(\Gamma_{r})$ with the fundamental solution $G_{a}$ of the Helmholtz equation (see Example~\ref{exacoustic})
$$
\Psi_{\kappa}^ru_{r}(x)=\int_{\Gamma_{r}}G_{a}(\kappa,x-y_{r})u_{r}(y_{r})d s(y_{r}),\;x\in\R^d\setminus\Gamma_{r}.
$$ 
Let $V_{\kappa}^r$ its trace on $\Gamma_{r}$
$$
V_{\kappa}^ru_{r}(x)=\int_{\Gamma_{r}}G_{a}(\kappa,x-y_{r})u_{r}(y_{r})d s(y_{r}),\;x\in\Gamma_{r}.
$$
Since $G_{a}$ is pseudo-homogeneous of class $-1$, the mapping
$$
\begin{array}{ccl}
B^{\infty}(0,\varepsilon)&\rightarrow &\mathscr{L}(H^{s}(\Gamma), H^{s+1}(\Gamma))\\
r&\mapsto&\tau_{r}V_{\kappa}^{r}\tau_{r}^{-1}
\end{array}
$$ 
is infinitely  G\^ateaux differentiable. The mapping
$$
\begin{array}{ccl}
B^{\infty}(0,\varepsilon_{p})&\rightarrow &\mathscr{L}\left(H^{s}(\Gamma), H^{s+\frac{1}{2}}(K_{p})\right)\\
r&\mapsto&\tau_{r}\Psi_{\kappa}^r\tau_{r}^{-1}
\end{array}
$$ 
is infinitely differentiable and its first derivative at $r=0$  can be extended to a linear continuous operator from  
$H^{s}(\Gamma)$ to $H^{s+\frac{1}{2}}(\Omega)\cup H^{s+\frac{1}{2}}_{loc}(\overline{\Omega^c})$. 
\end{example}
Similar results can be deduced for the elastic single layer potential.

\begin{example}(\textbf{Acoustic double layer kernel})\label{Dk} Let $d=2$ or $d=3$ and $t\in\R$.
 We denote by $D_{\kappa}^{r}$ the boundary  integral operator defined for $u_{r}\in H^t(\Gamma_{r})$ by
$$D_{\kappa}^{r}u_{r}(x)=\int_{\Gamma_{r}}\nn_{r}(x_{r})\cdot\nabla G_{a}(\kappa,y_{r}-x_{r})u_{r}(y_{r})d s(y_{r}).$$
The mapping
$$\begin{array}{ccl}B^{\infty}(0,\varepsilon)&\rightarrow &\mathscr{L}(H^{t}(\Gamma), H^{t+1}(\Gamma))\\r&\mapsto&\tau_{r}D_{\kappa}^{r}\tau_{r}^{-1}\end{array}$$ is $\mathscr{C}^{\infty}$-G\^ateaux differentiable . 

Indeed    the mapping 
$$
   B^{\infty}(0,\varepsilon)\ni r\mapsto g(r,x,y)=(\tau_{r}\nn_{r})(x)\cdot(x+r(x)-y-r(y))
$$ 
is $\mathscr{C}^{\infty}$ G\^ateaux differentiable and by using a local coordinate system  (see \cite{Potthast1}) we prove (when $d=3$)  that the G\^ateaux derivatives behaves as $|x-y|^2$ when $x-y\to0$. We use the same notations as in the proof of Lemma \ref{J}. Fix $x\in\Gamma$ and set $g_{x}(r,y)=g(r,x,y)$. We have that $g_{x}\in\mathscr{C}^{\infty}(B^{\infty}(0,\varepsilon)\times\Gamma,\R)$. If $\Gamma$ is parametrised by the atlas $(\mathcal{O}_{i},\phi_{i})_{1\leq i\leq p}$ then when $x\in\Gamma_{i}=\phi_{i}(\mathcal{O}_{i})\cap\Gamma$ we can write $x=\phi_{i}(\eta^x_{1},\eta^x_{2})$ where $(\eta^x_{1},\eta^x_{2})\in\mathcal{O}_{i}$. The tangent plane to $\Gamma$ at $x$ is generated by the vectors $e_{1}(x)=\frac{\partial \phi_{i}}{\partial\eta_{1}}(\eta^x_{1},\eta^x_{2})$ and $e_{2}(x)=\frac{\partial \phi_{i}}{\partial\eta_{2}}(\eta^x_{1},\eta^x_{2})$. 
Thus $g_{x}(r,\phi_{i}(\eta_{1},\eta_{2}))$ has the expression
\begin{multline*}
\frac{(\Id+\D r)\frac{\partial \phi_{i}}{\partial\eta_{1}}(\eta^x_{1},\eta^x_{2})\wedge(\Id+Dr)\frac{\partial \phi_{i}}{\partial\eta_{2}}(\eta^x_{1},\eta^x_{2})}{\big|(\Id+Dr)\frac{\partial \phi_{i}}{\partial\eta_{1}}(\eta^x_{1},\eta^x_{2})\wedge(\Id+Dr)\frac{\partial \phi_{i}}{\partial\eta_{2}}(\eta^x_{1},\eta^x_{2})\big|}
\cdot\\
\cdot\big((\Id+r)\circ\phi_{i}(\eta^x_{1},\eta^x_{2})-(\Id+r)\circ\phi_{i}(\eta_{1},\eta_{2})\big)
\end{multline*}
Using Taylor expansion we have when $y\to x$
$$
 g_{x}(r,y)=0+\D g_{x}(r)[x;y-x]+\frac{1}{2}\D^2g_{x}(r)[x;y-x,y-x]+\hdots
$$
Writing $g_{x}(r)=(g_{x}(r)\circ\phi_{i})\circ\phi_{i}^{-1}$, we have for all $r\in B^{\infty}(0,\epsilon)$ that 
$$
 \D g_{x}(r)=\D_{(\eta_{1},\eta_{2})}(g_{x}(r)\circ\phi_{i})\circ\D\phi_{i}^{-1}.
$$ 
By straigthforward computations we obtain that $\D_{(\eta_{1},\eta_{2})}(g_{x}(r)\circ\phi_{i})=0$ for all $r$ \cite{Potthast2}. Thus by differentiation with respect to $r$ we prove that $g_{x}(r,y)$ and all its G\^ateaux derivatives behaves as $|x-y|^2$ when $x-y\to0$.
\end{example}

\section{Shape differentiability  of surface differential operators, \\  application to hypersingular boundary integral operators}

Many classical hypersingular  boundary integral operators can be expressed as compositions of boundary integral operators with pseudo-homogeneous weakly singular kernels and of surface differential operators. Such representations are often used in the numerical implementation of hypersingular boundary integral operators.
Here we use these representations to study the shape derivatives of hypersingular boundary integral operators. To this end, in addition to the shape derivatives of the weakly singular integral boundary integral operators as studied in Section~~\ref{GDiffPH}, we need to determine the G\^ateaux derivatives with respect to deformations of the surface differential operators acting between Sobolev spaces: The tangential gradient is linear and continuous from $H^{t+1}(\Gamma)$ to $\HH^t(\Gamma)$,  the surface divergence is linear and continuous from $\HH^{t+1}(\Gamma)$ to $H^t(\Gamma)$.

\begin{example}
\label{Nk}(\textbf{Acoustic hypersingular kernel}) 
Let $\kappa\in\C$ with $\Im(\kappa)\ge0$  and $d=3$. The hypersingular kernel is the normal derivative of the double layer kernel. We have
\begin{multline*}
\frac{\partial}{\partial\nn(x)}\frac{\partial}{\partial\nn(y)}G_{a}(\kappa,x-y)\\
=-\nn(x)\cdot\nn(y)\Delta G_{a}(\kappa,x-y)+\nn(x)\cdot \Rot^x \big(\nabla^y G_{a}(\kappa,x-y)\wedge\nn(y)\big).
\end{multline*}
When $d=2$, for a scalar function $\varphi$ the term $-\nabla \varphi\wedge\nn$ is the arc-length derivative $\dfrac{d\varphi}{ds}$.
Using integration by parts with respect to the variable $y$ and that for a scalar function $v$ and a vector $\vec{a}\in\R^d$ it holds  $\nn\cdot\rot(v\vec{a})=-(\nabla v\wedge\nn)\cdot\vec{a}$ we obtain for a scalar density $u$
$$
\begin{array}{l}\displaystyle{\int_{\Gamma}\nn(x)\cdot \Rot^x \big(\nabla^y \big(G_{a}(\kappa,x-y)\big)\wedge\nn(y)\big)u(y)ds(y)}\\\hspace{2cm}=-\displaystyle{\int_{\Gamma}\big(\nabla^x \big(G_{a}(\kappa,x-y)\big)\wedge\nn(x)\big)\cdot\big(\nabla^y \big(u(y)\big)\wedge\nn(y)\big)ds(y)}.
\end{array}
$$
Finally we have 
$$
 \begin{aligned}
  \int_{\Gamma}\frac{\partial}{\partial\nn(x)}\frac{\partial}{\partial\nn(y)}&G_{a}(\kappa,x-y)u(y)ds(y)\\
     &=\kappa^{2}\int_{\Gamma}G_{a}(\kappa,x-y)u(y) (\nn(x)\cdot\nn(y))ds(y)\\
     &\quad-\int_{\Gamma}\big(\nabla_{\Gamma}^xG_{a}(\kappa,x-y)\wedge\nn(x)\big)\cdot\big(\nabla_{\Gamma}u(y)\wedge\nn(y)\big) ds(y).
 \end{aligned}
$$
\end{example}
A similar technique can be applied to the elastic hypersingular boundary integral operator using integration by part and G\"unter's tangential derivatives (see \cite{HsiaoWendland, Kupradze}).

\begin{lemma} 
\label{propGunter} 
Let $d=3$ and $\Gamma$ be a closed orientable surface  in $\R^3$. The tangential G\"unter derivative denoted by $\mathcal{M}$ is defined for a vector function $\vv\in\mathscr{C}^1(\Gamma,\C^3)$ by
$$\mathcal{M}\vv=\big(\nabla\vv-(\Div\vv)\cdot\Id_{\R^3}\big)\nn=\frac{\partial}{\partial\nn}\vv-(\Div\vv)\nn+\nn\wedge\Rot\vv.$$
\begin{itemize}
\item[(i)] We set $\nn=(n_{k})_{1\leq k\leq3}$ and $\mathcal{M}_{y}=(m_{jk})_{1\leq j,k\leq3}$. We have
$$m_{jk}=n_{k}(y)\frac{\partial}{\partial y_{j}}-n_{j}(y)\frac{\partial}{\partial y_{k}}=-m_{kj}.$$
\item[(ii)]  For any scalar functions $u,\tilde{u}$ in $\mathscr{C}^1(\Gamma,\C)$ and vector functions $\vv, \tilde{\vv}$ in $\mathscr{C}^1(\Gamma,\C^3)$ there holds the Stokes formula
\begin{equation}\label{Gunter}\int_{\Gamma}(m_{jk}u)\cdot\tilde{u}\,ds=-\int_{\Gamma}u\cdot(m_{jk}\tilde{u})\,ds\text{ and }\int_{\Gamma}(\mathcal{M}\vv)\cdot\tilde{\vv}\,ds=+\int_{\Gamma}\vv\cdot(\mathcal{M}\tilde{\vv})\,ds.\end{equation}
\end{itemize}
\end{lemma}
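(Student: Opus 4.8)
The plan is to reduce both assertions to two ingredients only: elementary componentwise identities in $\R^3$, and the surface divergence theorem on the closed surface $\Gamma$. For part (i), I would start from the matrix definition $\mathcal{M}\vv=\big(\nabla\vv-(\Div\vv)\Id_{\R^3}\big)\nn$ and expand it componentwise. Using that the $i$-th column of $[\nabla\vv]$ is $\nabla v_i$, one gets $(\mathcal{M}\vv)_j=\sum_k n_k\,\partial_{y_j}v_k-n_j\,\Div\vv=\sum_k\big(n_k\,\partial_{y_j}-n_j\,\partial_{y_k}\big)v_k$, which at once identifies $m_{jk}=n_k\,\partial_{y_j}-n_j\,\partial_{y_k}$ and displays the antisymmetry $m_{jk}=-m_{kj}$. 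That $m_{jk}$, and hence $\mathcal{M}$, is a genuine surface operator independent of the chosen extension I would see by writing $m_{jk}u=\VV_{jk}\cdot\nabla\tilde u$ with $\VV_{jk}=n_ke_j-n_je_k$ and observing $\VV_{jk}\cdot\nn=0$, so that $m_{jk}u=\VV_{jk}\cdot\nabla_\Gamma u=n_k(\nabla_\Gamma u)_j-n_j(\nabla_\Gamma u)_k$ depends on $u$ only through $u_{|\Gamma}$. Finally the third expression for $\mathcal{M}\vv$ follows from the classical identity $\big(\nn\wedge\Rot\vv\big)_j=\sum_k n_k\,\partial_{y_j}v_k-\frac{\partial v_j}{\partial\nn}$, obtained from the contraction $\sum_l\varepsilon_{jkl}\varepsilon_{lpq}=\delta_{jp}\delta_{kq}-\delta_{jq}\delta_{kp}$; adding $\frac{\partial\vv}{\partial\nn}-(\Div\vv)\nn$ reproduces $[\nabla\vv]\nn-(\Div\vv)\nn=\mathcal{M}\vv$.

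For the scalar Stokes formula in (ii), the crucial observation is that the tangential field $\VV_{jk}=n_ke_j-n_je_k$ is also surface-divergence-free. To check $\Div_\Gamma\VV_{jk}=0$ I would extend $\nn$ as the gradient of the signed distance to $\Gamma$, so that on $\Gamma$ the matrix $[\D\nn]$ is symmetric and $[\D\nn]\nn=0$; then $\nabla_\Gamma n_k$ is the restriction of $\nabla\tilde n_k$, hence $(\nabla_\Gamma n_k)_j=(\nabla_\Gamma n_j)_k$, and since the constant fields $e_j$ satisfy $\Div_\Gamma e_j=0$ one obtains $\Div_\Gamma\VV_{jk}=(\nabla_\Gamma n_k)_j-(\nabla_\Gamma n_j)_k=0$. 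Consequently $m_{jk}u=\VV_{jk}\cdot\nabla_\Gamma u=\Div_\Gamma(u\,\VV_{jk})$, and surface integration by parts on the boundaryless $\Gamma$ — the mean-curvature term dropping out because $u\,\VV_{jk}$ is tangential — gives $\int_\Gamma(m_{jk}u)\,\tilde u\,ds=\int_\Gamma\Div_\Gamma(u\,\VV_{jk})\,\tilde u\,ds=-\int_\Gamma u\,\VV_{jk}\cdot\nabla_\Gamma\tilde u\,ds=-\int_\Gamma u\,(m_{jk}\tilde u)\,ds$.

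For the vector Stokes formula I would expand $\int_\Gamma(\mathcal{M}\vv)\cdot\tilde\vv\,ds=\sum_{j,k}\int_\Gamma(m_{jk}v_k)\,\tilde v_j\,ds$, move $m_{jk}$ onto $\tilde v_j$ by the scalar identity just proved, relabel the summation pair $(j,k)\leftrightarrow(k,j)$, and invoke $m_{kj}=-m_{jk}$: the sign change from integration by parts and the sign change from the antisymmetry combine to a plus, leaving $\int_\Gamma(\mathcal{M}\vv)\cdot\tilde\vv\,ds=\sum_{j,k}\int_\Gamma v_j\,(m_{jk}\tilde v_k)\,ds=\int_\Gamma\vv\cdot(\mathcal{M}\tilde\vv)\,ds$.

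I expect the only substantial step to be the vanishing $\Div_\Gamma\VV_{jk}=0$, equivalently the symmetry $(\nabla_\Gamma n_k)_j=(\nabla_\Gamma n_j)_k$ of the Weingarten map; everything else is index bookkeeping together with the product rules for $\nabla_\Gamma$ and $\Div_\Gamma$. The second point requiring care is the absence of boundary contributions in the surface integration by parts, which is exactly where the hypotheses that $\Gamma$ is closed and orientable and that $\VV_{jk}$ is tangential are used; the plus sign in the vector formula, as opposed to the minus sign in the scalar one, is an immediate consequence of $m_{jk}=-m_{kj}$ and should be highlighted as the only genuinely non-obvious feature of the statement.
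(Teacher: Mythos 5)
Your argument is correct, and in fact the paper offers no proof of this lemma at all: it is quoted as a known fact with a pointer to Hsiao--Wendland and Kupradze, so there is no internal proof to compare against. Your proposal fills that gap in a self-contained way, and every step checks out: the componentwise expansion giving $m_{jk}=n_k\partial_{y_j}-n_j\partial_{y_k}$ and the $\varepsilon$-contraction yielding the second expression for $\mathcal{M}\vv$ are routine; the observation that $m_{jk}u=\VV_{jk}\cdot\nabla_\Gamma u$ with $\VV_{jk}=n_ke_j-n_je_k$ tangential settles extension-independence; and the identity $\Div_\Gamma\VV_{jk}=(\nabla_\Gamma n_k)_j-(\nabla_\Gamma n_j)_k=0$, proved via the signed-distance extension of $\nn$ (symmetry of the Weingarten map), reduces the scalar Stokes formula to the surface divergence theorem on the closed surface, the curvature term dropping because $u\tilde u\,\VV_{jk}$ is tangential. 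The vector formula then follows from the antisymmetry $m_{jk}=-m_{kj}$ exactly as you say, and this is indeed where the plus sign comes from. Worth noting for comparison: the classical proof in the cited references goes through the identity $m_{jk}u=-\sum_l\varepsilon_{jkl}\bigl(\nn\wedge\nabla u\bigr)_l$ together with the Stokes-theorem consequence $\int_\Gamma\nn\wedge\nabla\psi\,ds=0$ on a closed surface, applied to $\psi=u\tilde u$; that route gets the antisymmetric integration-by-parts formula without ever invoking the curvature tensor, whereas your route makes the geometric input (symmetry of $[\nabla\nn]$) explicit. Both are legitimate; yours is slightly longer but stays entirely within the surface calculus ($\nabla_\Gamma$, $\Div_\Gamma$) that the paper has already set up, which is a reasonable stylistic fit.
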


\begin{example}\label{Tk}(\textbf{Elastic hypersingular kernel}) Let $\omega\in\R$ and $d=3$. Denote by $\rho,\mu$ and $\lambda$ the density and Lam\'e's constants. The hypersingular kernel  is defined by 
$$
 H(x,y)=T_{x}\transposee{\big(T_{y}G_{e}(\kappa, x-y)\big)}
$$
where $G_{e}$ is the fundamental solution of the Navier equation and $T$ is the traction operator defined in Example~\ref{exelastic}. First of all we rewrite the operator $T\uu$ as
\begin{equation}
\label{ecriture1}
 T\uu=2\mu\mathcal{M}\uu+(\lambda+2\mu)(\Div\uu)\nn-\mu\nn\wedge\Rot\uu.\end{equation}
Then we apply the operator $T_{y}$ in the form \eqref{ecriture1} to the tensor $G_{e}(\kappa_{s},\kappa_{p},x-y)$. It follows
\begin{multline*}
\transposee{\big(T_{y}G_{e}(\kappa_{s},\kappa_{p},x-y)\big)}\\
=2\mu\transposee{\big(\mathcal{M}_{y}G_{e}(\kappa, x-y)\big)}-\transposee{\big(\nn(y)\wedge\Rot_{y}G_{a}(\kappa_{s},x-y)\Id_{\R^3}\big)}\\
+\dfrac{(\lambda+2\mu)}{\mu}\,\transposee{\left(\nn(y)\cdot\Div_{y} G_{e}(\kappa_{s},\kappa_{p},x-y)\right)}.
\end{multline*}
\begin{multline*}
\Div_{y} G_{e}(\kappa_{s},\kappa_{p},x-y)\\
 =\transposee{\big(\nabla_{y} G_{a}(\kappa_{s},x-y)\big)}+\dfrac{1}{\kappa_{s}^2}\transposee{\nabla}_{y} \Delta_{y}\big( G_{a}(\kappa_{s},x-y)- G_{a}(\kappa_{p},x-y)\big)\\
 =\dfrac{\kappa_{p}^2}{\kappa_{s}^2}\transposee{\big(\nabla_{y} G_{a}(\kappa_{p},x-y)\big)}
\end{multline*}
$$
\nn(y)\wedge\Rot_{y}G_{a}(\kappa_{s},x-y)\Id_{\R^3}=\left(\mathcal{M}_{y}-\frac{\partial}{\partial\nn(y)}+\nn(y)\cdot\Div_{y}\right)G_{a}(\kappa_{s},x-y)\Id_{\R^3}
$$
In virtue of the property $(i)$ in Lemma~\ref{propGunter} we can write
$$
\begin{array}{ll}\transposee{\Big(\nn(y)\wedge\Rot_{y}G_{a}(\kappa_{s},x-y)\Id_{\R^3}\Big)}=&\left(-\mathcal{M}_{y}-\dfrac{\partial}{\partial\nn(y)}\right)G_{a}(\kappa_{s},x-y)\Id_{\R^3}\vspace{1mm}\\
&+\transposee{\Big(\nn(y)\cdot\transposee{\nabla}_{y}G_{a}(\kappa_{s},x-y)\Big)}
\end{array}
$$
Collecting the equalities we obtain
\begin{multline*}
\transposee{\big(T_{y}G_{e}(\kappa_{s},\kappa_{p},x-y)\big)}\\
=2\mu\transposee{\big(\mathcal{M}_{y}G_{e}(\kappa, x-y)\big)}+\left(\dfrac{\partial}{\partial\nn(y)}+\mathcal{M}_{y}\right)G_{a}(\kappa_{s},x-y)\Id_{\R^3}\\
+\,\nabla_{y} \Big(G_{a}(\kappa_{p},x-y)-G_{a}(\kappa_{s},x-y)\Big)\cdot\transposee{\nn(y)}.
\end{multline*}
 By integration by part and using the properties  $(i)$ and $(ii)$ of Lemma~\ref{propGunter} we obtain that
\begin{multline*}
\int_{\Gamma}\transposee{\big(T_{y}G_{e}(\kappa_{s},\kappa_{p},x-y)\big)}\uu(y)\,ds(y)
=2\mu\int_{\Gamma}G_{e}(\kappa_{s},\kappa_{p}, x-y)\mathcal{M}_{y}\uu(y)\,ds(y)\\
 -\int_{\Gamma}G_{a}(\kappa_{s},x-y)\mathcal{M}_{y}\uu(y)\,ds(y)+\int_{\Gamma}\dfrac{\partial}{\partial\nn(y)}G_{a}(\kappa_{s},x-y)\uu(y)ds(y)\\
 +\int_{\Gamma}\nabla_{y} \Big(G_{a}(\kappa_{p},x-y)-G_{a}(\kappa_{s},x-y)\Big)\big(\nn(y)\cdot\uu(y)\big)\,ds(y).
\end{multline*}
 The kernel of the last term in the right hand side is pseudo-homogeneous of class $-2$.  Thus  $T_{x}$ applied to this term yields a pseudo-homogeneous kernel of class $-1$. Similarly to $\transposee{\big(T_{y}G_{e}(\kappa_{s},\kappa_{p},x-y)\big)}$, the kernel $T_{x}G_{e}(\kappa_{s},\kappa_{p},x-y)$ can be  rewritten in terms of products of weakly singular kernels and the G\"unter derivative $\mathcal{M}_{x}$. Now we apply the operator $T_{x}$ to the  kernels of the second and third terms on the right hand side  in the form
$$
 T_{x}\uu=(\lambda+\mu)\nn\big(\Div_{x}\uu\big)+\mu\left(\frac{\partial}{\partial\nn(x)}+\mathcal{M}_{x}\right)\uu.
$$
We obtain 
\begin{multline*}
T_{x}\Big\{\dfrac{\partial}{\partial\nn(y)}G_{a}(\kappa_{s},x-y)\cdot\Id_{R^3}\Big\}=\mu\dfrac{\partial^2}{\partial\nn(x)\partial\nn(y)}G_{a}(\kappa_{s},x-y)\cdot\Id_{\R^3}\\
 \qquad+\mu\mathcal{M}_{x}\Big(\dfrac{\partial}{\partial\nn(y)}G_{a}(\kappa_{s},x-y)\cdot\Id_{\R^3}\Big)+(\lambda+\mu)\nn(x)\cdot\nabla_{x}^{\top}\dfrac{\partial}{\partial\nn(y)}G_{a}(\kappa_{s},x-y)
\end{multline*}
\begin{multline*}
-T_{x}\Big\{G_{a}(\kappa_{s},x-y)\cdot\Id_{R^3}\Big\}=-\mu\dfrac{\partial}{\partial\nn(x)}G_{a}(\kappa_{s},x-y)\cdot\Id_{\R^3}\\
\qquad -\mu\,\mathcal{M}_{x}\left(G_{a}(\kappa_{s},x-y)\cdot\Id_{\R^3}\right)-(\lambda+\mu)\nn(x)\cdot\transposee{\nabla_{x}}G_{a}(\kappa_{s},x-y)
\end{multline*}
 We use the equality 
$$
\nabla_{x}\dfrac{\partial}{\partial\nn(y)}G_{a}(\kappa_{s},x-y)=\mathcal{M}_{y}\nabla_{x}G_{a}(\kappa_{s},x-y)-\nn(y)\Delta_{y}G_{a}(\kappa_{s},x-y)
$$ and Lemma~\ref{propGunter} to show that (see \cite{HsiaoWendland} pp. 52)
\begin{multline*}
\int_{\Gamma}\nn(x)\cdot\transposee{\nabla_{x}}\dfrac{\partial}{\partial\nn(y)}G_{a}(\kappa_{s},x-y)\uu(y)ds(y)\\
 -\int_{\Gamma}\nn(x)\cdot\transposee{\nabla_{x}}G_{a}(\kappa_{s},x-y)\mathcal{M}_{y}\uu(y)ds(y)\\
 \qquad=\kappa_{s}^2\,\nn(x)\,\int_{\Gamma}G_{a}(\kappa_{s},x-y)(\nn(y)\cdot\uu(y))\,ds(y).
\end{multline*}
 Finally we have
\begin{multline*}
  \int_{\Gamma}T_{x}\transposee{\big(T_{y}G_{e}(\kappa_{s},\kappa_{p},x-y)\big)}\uu(y)\,ds(y)
  \\=2\mu\int_{\Gamma}\big[T_{x}G_{e}(\kappa_{s},\kappa_{p}, x-y)\big]\mathcal{M}_{y}\uu(y)\,ds(y)\\
  +\mu\int_{\Gamma}\dfrac{\partial^2}{\partial\nn(x)\partial\nn(y)}G_{a}(\kappa_{s},x-y)\uu(y)ds(y)\,
  \\-\mu\int_{\Gamma}\frac{\partial}{\partial\nn(x)}G_{a}(\kappa_{s},x-y)\mathcal{M}_{y}\uu(y)\,ds(y)\\
  -\mu\mathcal{M}_{x}\int_{\Gamma}G_{a}(\kappa_{s},x-y)\mathcal{M}_{y}\uu(y)\,ds(y)
  +\mu\mathcal{M}_{x}\int_{\Gamma}\dfrac{\partial}{\partial\nn(y)}G_{a}(\kappa_{s},x-y)\uu(y)ds(y)\\
  +\int_{\Gamma}T_{x}\nabla_{y} \Big(G_{a}(\kappa_{p},x-y)-G_{a}(\kappa_{s},x-y)\Big)\big(\nn(y)\cdot\uu(y)\big)\,ds(y)\\
  +\,\kappa_{s}^2\,(\lambda+\mu)\nn(x)\,\int_{\Gamma}G_{a}(\kappa_{s},x-y)(\nn(y)\cdot\uu(y))\,ds(y).
\end{multline*}

\end{example}
We see that the  boundary integral operator with either the acoustic hypersingular kernel or the  elastic hypersingular kernel are  operators of order $+1$ on the Sobolev spaces $H^t(\Gamma)$ for $t\in\R$. Using the integral representations above, the differentiability properties of these operators can be deduced from the knowledge of the differentiability properties of the surface differential operators. 
Following the same pullback procedure as in Section~\ref{GDiffPH}, the analysis of the hypersingular integral operators is finally reduced to the analysis of the mappings 
$$
\begin{array}{lcl}r&\mapsto&\tau_{r}\nabla_{\Gamma_{r}}\tau_{r}^{-1}\\
 r&\mapsto&\tau_{r}\Div_{\Gamma_{r}}\tau_{r}^{-1}.
\end{array}
$$
Indeed, the G\"unter derivative can be rewritten in terms of these two differential operators:
$$
\mathcal{M}\vv=\big(\nabla_{\Gamma}\vv-(\Div_{\Gamma}\vv)\cdot\Id_{\R^3}\big)\nn.
$$
\medskip
The results are established in the following theorems.

\begin{theorem}\label{nabla}
The mapping $$\begin{array}{cccc}\mathcal{G}:&B^{\infty}(0,\varepsilon)&\rightarrow&\mathscr{L}(H^{t+1}(\Gamma),\HH^{t}(\Gamma))\\&r&\mapsto&\tau_{r}\nabla_{\Gamma_{r}}\tau_{r}^{-1}\end{array}$$ is $\mathscr{C}^{\infty}$-G\^ateaux differentiable and its first derivative  at $r_{0}$ is  defined for 
$\xi\in\mathscr{C}^{\infty}(\Gamma,\R^d)$ by
$$
d\mathcal{G}[r_{0},\xi]u=-[\mathcal{G}(r_{0})\xi]\mathcal{G}(r_{0})u+\big(\mathcal{G}(r_{0})u\cdot[\mathcal{G}(r_{0})\xi]\mathcal{N}(r_{0})\big)\mathcal{N}(r_{0}).
$$
\end{theorem}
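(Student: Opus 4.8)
The plan is to express the conjugated operator $\tau_{r}\nabla_{\Gamma_{r}}\tau_{r}^{-1}$ explicitly as a smooth matrix multiplier applied to the fixed operator $\nabla_{\Gamma}$, and then to differentiate this representation, using Lemma~\ref{N} for the moving normal.

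The crucial step is to obtain the explicit formula. For a scalar $u\in\mathscr{C}^{\infty}(\Gamma)$, differentiating the identity $u=(\tau_{r}^{-1}u)\circ(\Id+r)$ along a direction $\eta$ tangent to $\Gamma$ at $x$ gives
$$
 \nabla_{\Gamma}u(x)\cdot\eta=\big(\tau_{r}\nabla_{\Gamma_{r}}\tau_{r}^{-1}u\big)(x)\cdot\big[\Id+\D_{\Gamma}r(x)\big]\eta ,
$$
and, as $\eta$ runs over the tangent plane of $\Gamma$ at $x$, the vectors $[\Id+\D_{\Gamma}r(x)]\eta$ span the tangent plane of $\Gamma_{r}$ at $x+r(x)$ (these are the vectors $e_{i}(r,x)$ from the proof of Lemma~\ref{J}). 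Together with the tangency relation $\big(\tau_{r}\nabla_{\Gamma_{r}}\tau_{r}^{-1}u\big)\cdot\mathcal{N}(r)=0$ this determines $\tau_{r}\nabla_{\Gamma_{r}}\tau_{r}^{-1}u$ uniquely. I would then introduce the matrix field
$$
 M_{r}=\Id+[\D_{\Gamma}r]+\big(\mathcal{N}(r)-\nn\big)\transposee{\nn}\ \in\ \mathscr{C}^{\infty}(\Gamma,\R^{d\times d}),
$$
which is the extension of the tangential differential of $\Id+r$ that sends $\nn$ to the normal $\mathcal{N}(r)$ of $\Gamma_{r}$; since $M_{0}=\Id$ it is invertible for $r\in B^{\infty}(0,\varepsilon)$ (shrinking $\varepsilon$ if necessary). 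A direct check of the two characterising relations — using $M_{r}\nn=\mathcal{N}(r)$ and $M_{r}\eta=[\Id+\D_{\Gamma}r]\eta$ for tangent $\eta$ — shows that
$$
 \tau_{r}\nabla_{\Gamma_{r}}\tau_{r}^{-1}u=\transposee{(M_{r}^{-1})}\,\nabla_{\Gamma}u ,
$$
so in particular $\mathcal{G}(0)=\nabla_{\Gamma}$ and $\mathcal{G}(r)$ is bounded from $H^{t+1}(\Gamma)$ to $\HH^{t}(\Gamma)$ for every $r$.

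For the $\mathscr{C}^{\infty}$-G\^ateaux differentiability: $r\mapsto\Id+[\D_{\Gamma}r]$ is affine, hence $\mathscr{C}^{\infty}$-G\^ateaux differentiable with derivative $\xi\mapsto[\D_{\Gamma}\xi]$ and vanishing derivatives of order $\ge2$; $r\mapsto\mathcal{N}(r)$ is $\mathscr{C}^{\infty}$-G\^ateaux differentiable by Lemma~\ref{N}; matrix transposition and inversion are $\mathscr{C}^{\infty}$ on invertible matrices. By the chain and product rules in Fr\'echet spaces, $r\mapsto\transposee{(M_{r}^{-1})}$ is $\mathscr{C}^{\infty}$-G\^ateaux differentiable into $\mathscr{C}^{\infty}(\Gamma,\R^{d\times d})$. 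Multiplication by a fixed smooth matrix field is bounded on $\HH^{t}(\Gamma)$, the assignment ``matrix field $\mapsto$ multiplication operator'' is linear and continuous into $\mathscr{L}(\HH^{t}(\Gamma))$, and $\nabla_{\Gamma}\in\mathscr{L}(H^{t+1}(\Gamma),\HH^{t}(\Gamma))$ is constant; composing these, $\mathcal{G}(r)=\transposee{(M_{r}^{-1})}\,\nabla_{\Gamma}$ is $\mathscr{C}^{\infty}$-G\^ateaux differentiable from $B^{\infty}(0,\varepsilon)$ into $\mathscr{L}(H^{t+1}(\Gamma),\HH^{t}(\Gamma))$.

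Finally, I would obtain the first derivative by differentiating $\transposee{(M_{r}^{-1})}$:
$$
 d\mathcal{G}[r_{0},\xi]u=-\transposee{(M_{r_{0}}^{-1})}\,\transposee{\big(dM[r_{0},\xi]\big)}\,\transposee{(M_{r_{0}}^{-1})}\,\nabla_{\Gamma}u ,
$$
with $dM[r_{0},\xi]=[\D_{\Gamma}\xi]+\big(d\mathcal{N}[r_{0},\xi]\big)\transposee{\nn}=[\D_{\Gamma}\xi]-\big([\mathcal{G}(r_{0})\xi]\mathcal{N}(r_{0})\big)\transposee{\nn}$ by Lemma~\ref{N}. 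Substituting $\transposee{(M_{r_{0}}^{-1})}\nabla_{\Gamma}u=\mathcal{G}(r_{0})u$, $\transposee{(M_{r_{0}}^{-1})}[\nabla_{\Gamma}\xi]=[\mathcal{G}(r_{0})\xi]$ (column by column, from the formula above applied to each component $\xi_{i}$), and $\transposee{(M_{r_{0}}^{-1})}\nn=\mathcal{N}(r_{0})$ (which follows from $\transposee{M_{r_{0}}}\mathcal{N}(r_{0})=\nn$, obtained by pairing with tangent vectors and with $\nn$), these collapse to
$$
 d\mathcal{G}[r_{0},\xi]u=-[\mathcal{G}(r_{0})\xi]\,\mathcal{G}(r_{0})u+\big(\mathcal{G}(r_{0})u\cdot[\mathcal{G}(r_{0})\xi]\mathcal{N}(r_{0})\big)\mathcal{N}(r_{0}) ,
$$
and linearity, symmetry and continuity of the higher derivatives follow from those of the derivatives of $\transposee{(M_{r}^{-1})}$. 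The main obstacle is the first step — pinning down the correct matrix $M_{r}$ and verifying the transported-gradient formula while keeping the two distinct tangent planes straight; once this explicit representation is available, the rest is a routine application of the Fr\'echet-space calculus of Section~\ref{ShapeD} together with Lemma~\ref{N}, and the same scheme (repeated differentiation of $\transposee{(M_{r}^{-1})}$) yields all higher-order derivatives.
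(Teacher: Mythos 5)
Your proof is correct, and it takes a genuinely different route from the paper's. The paper works with the definition of $\nabla_{\Gamma}$ via an extension $\tilde u$: it writes $\mathcal{G}(r)u=f(r)u-\big(f(r)u\cdot\mathcal{N}(r)\big)\mathcal{N}(r)$, where $f(r)u=\tau_{r}\big(\nabla\widetilde{\tau_{r}^{-1}u}\big)_{|\Gamma_{r}}=\transposee{\big[(\Id+\D r)_{|\Gamma}^{-1}\big]}\nabla\tilde u_{|\Gamma}$ is the pulled-back ambient gradient, differentiates the matrix inverse $h(r)=[(\Id+\D r)_{|\Gamma}]^{-1}$ to get $df[r_{0},\xi]u=-[f(r_{0})\xi]f(r_{0})u$, and then applies the product rule together with Lemma~\ref{N} to the two-term decomposition. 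You instead absorb the normal correction into a single invertible matrix field $M_{r}=\Id+[\D_{\Gamma}r]+(\mathcal{N}(r)-\nn)\transposee{\nn}$ and prove the intrinsic identity $\mathcal{G}(r)=\transposee{(M_{r}^{-1})}\nabla_{\Gamma}$ by the chain-rule characterisation of the surface gradient (your verification of $M_{r}\nn=\mathcal{N}(r)$, of the tangency condition, and of $\transposee{(M_{r_{0}}^{-1})}\nn=\mathcal{N}(r_{0})$ are all correct, using $[\D_{\Gamma}r]\nn=0$), so that only one inverse-matrix multiplier has to be differentiated, with Lemma~\ref{N} entering through $dM$. What your route buys: it avoids extensions $\tilde u$ altogether, exhibits $\mathcal{G}(r)$ as a smooth matrix coefficient times the fixed operator $\nabla_{\Gamma}$ (which makes both the uniform boundedness $H^{t+1}(\Gamma)\to\HH^{t}(\Gamma)$ and the reduction of operator differentiability to differentiability of the coefficient immediate), and gives a single closed formula whose repeated differentiation yields all orders; the price is slightly heavier final algebra (the identities $\transposee{(M_{r_{0}}^{-1})}[\nabla_{\Gamma}\xi]=[\mathcal{G}(r_{0})\xi]$ and $\transposee{(M_{r_{0}}^{-1})}\nn=\mathcal{N}(r_{0})$), whereas the paper's splitting keeps each ingredient elementary and mirrors the definition \eqref{G} directly. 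Two cosmetic remarks: the operator identity should be stated first for $u\in\mathscr{C}^{\infty}(\Gamma)$ and extended to $H^{t+1}(\Gamma)$ by density and boundedness of both sides, and the invertibility of $M_{r}$ in fact holds for every $r\in B^{\infty}(0,\varepsilon)$ without shrinking $\varepsilon$, since $\Id+r$ is already assumed to be a diffeomorphism onto $\Gamma_{r}$, so $M_{r}$ maps the tangent plane bijectively onto the tangent plane of $\Gamma_{r}$ and $\nn$ to the unit normal $\mathcal{N}(r)$.
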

\begin{remark}
Note that we can write $d\mathcal{N}[r_{0},\xi]=-[\mathcal{G}(r_{0})\xi]\mathcal{N}(r_{0})$. Since the first derivatives of $\mathcal{N}$ and $\mathcal{G}$ are expressed in terms of $\mathcal{N}$ and $\mathcal{G}$, we can obtain the G\^ateaux derivatives of all orders recursively. 
\end{remark}

\begin{proof}In accordance with the Definition \eqref{G} and Lemma \ref{N}, to prove the $\mathscr{C}^{\infty}$-G\^ateaux differentiability of $\mathcal{G}$ we have to prove the $\mathscr{C}^{\infty}$-G\^ateaux differentiability of the mapping 
$$
 f:B^{\infty}(0,\varepsilon)\ni r\mapsto\left\{u\mapsto \tau_{r}\left(\nabla\widetilde{\tau_{r}^{-1}u}\right)_{|_{\Gamma_{r}}}\right\}\in \mathscr{L}(H^{t+1}(\Gamma),\HH^{t}(\Gamma)).
$$
 For $x\in\Gamma$, we have 
\begin{multline*}
  \tau_{r}\left(\nabla\widetilde{\tau_{r}^{-1}u}\right)_{|_{\Gamma_{r}}}(x)
  =\nabla\left(\widetilde{u}\circ(\Id+r)^{-1}\right)_{|_{\Gamma_{r}}}(x+r(x))\\
  =\transposee{\left(\Id+\D r\right)_{|_{\Gamma_{r}}}^{-1}}(x+r(x))\circ\nabla\widetilde{u}_{|_{\Gamma}}(x),
\end{multline*}
and
$$
 \left(\Id+\D r\right)_{|_{\Gamma_{r}}}^{-1}(x+r(x))=\left[(\Id+\D r)_{|_{\Gamma}}(x)\right]^{-1}.
$$
The mapping $g: B^{\infty}(0,\varepsilon)\ni r\mapsto(\Id+\D r)_{|_{\Gamma}}\in\mathscr{C}^{\infty}(\Gamma)$ is continuous, and $\mathscr{C}^{\infty}$-G\^ateaux differentiable. Its first derivative is  $dg[0,\xi]=[\D\xi]_{|_{\Gamma}}$ and its higher order derivatives vanish. One can easily see that the mapping 
$h:r\in B^{\infty}_{\epsilon}\mapsto \left\{x\mapsto [g(r)]^{-1}(x)\right\}\in\mathscr{C}^{\infty}(\Gamma)$ is also $\mathscr{C}^{\infty}$ G\^ateaux-differentiable and that we have at $r_{0}$ and in the direction $\xi$:
$$
 dh[r_{0},\xi]=-h(r_{0})\circ dg[r_{0},\xi]\circ h(r_{0})=-h(r_{0})\circ[\D\xi]_{|_{\Gamma}}\circ h(r_{0}).
$$
 and 
$$
 d^nh[r_{0},\xi_{1},\hdots,\xi_{n}]=(-1)^n\!\!\!\sum_{ s\in\mathscr{S}_{n}}(\Id+\D r_{0})^{-1}\circ[\tau_{r_{0}}\D\tau_{r_{0}}^{-1}\xi_{ s(1)}]\circ\hdots\circ[\tau_{r_{0}}\D\tau_{r_{0}}^{-1}\xi_{ s(n)}]
$$ 
where $\mathscr{S}_{n}$ is the permutation group of $\{1,\hdots,n\}$.
Finally we obtain the $\mathscr{C}^{\infty}$-G\^ateaux differentiability of $f$ and we have 
$$df[r_{0},\xi]u=-[f(r_{0})\xi]f(r_{0})u.$$
Notice that this result can also be justified by using commutators :  for example at $r=0$ in the direction , we have
$$\frac{\partial}{\partial r}(\tau_{r}\nabla\tau_{r}^{-1}u)[0,\xi]=\frac{\partial}{\partial\xi}(\nabla u)-\nabla\frac{\partial}{\partial\xi}u=-[\nabla\xi]\nabla\uu $$
where $\dfrac{\partial}{\partial\xi}=\xi\cdot\nabla$.

To obtain the expression of the first derivative of $\mathcal{G}$ we have to differentiate the following expression:
$$
 \begin{array}{cl}
 \mathcal{G}(r)u&=(\tau_{r}\nabla_{\Gamma_{r}}\tau_{r}^{-1}u)=\tau_{r}\nabla \left(\widetilde{\tau_{r}^{-1}u}\right)-\left(\tau_{r}\nn_{r}\cdot\left(\tau_{r}\nabla\left( \widetilde{\tau_{r}^{-1}u}\right)\right)\right)\tau_{r}\nn_{r}\\
 &=f(r)u-\left(f(r)u\cdot \mathcal{N}(r)\right)\mathcal{N}(r).
 \end{array}
$$ 
By Lemma \ref{N} and the chain and product rules we have
\begin{align*}
d \mathcal{G}[r_{0},\xi]&=-[f(r_{0})\xi]f(r_{0})u+\left([f(r_{0})\xi]f(r_{0})u\cdot \mathcal{N}(r_{0})\right)\mathcal{N}(r_{0})\\
 &+\left(f(r_{0})u\cdot [\mathcal{G}(r_{0})\xi]\mathcal{N}(r_{0})\right)\mathcal{N}(r_{0})+\left(f(r_{0})u\cdot \mathcal{N}(r_{0})\right)[G(r_{0})\xi]\mathcal{N}(r_{0})
\end{align*}
Combining the first two terms in the right hand side, we get
\begin{align*}
d\mathcal{G}[r_{0},\xi]&=-[\mathcal{G}(r_{0})\xi]f(r_{0})u+\left(f(r_{0})u\cdot\mathcal{N}(r_{0})\right)[\mathcal{G}(r_{0})\xi]\mathcal{N}(r_{0})\\
 &\qquad+\left(f(r_{0})u\cdot [\mathcal{G}(r_{0})\xi]\mathcal{N}(r_{0})\right)\mathcal{N}(r_{0})\vspace{2mm}\\
 &=-[\mathcal{G}(r_{0})\xi]\mathcal{G}(r_{0})u+\left(f(r_{0})u\cdot [\mathcal{G}(r_{0})\xi]\mathcal{N}(r_{0})\right)\mathcal{N}(r_{0}).
\end{align*}
To conclude, it suffices to note that 
$$
\left(f(r_{0})u\cdot [\mathcal{G}(r_{0})\xi]\mathcal{N}(r_{0})\right)=\left(\mathcal{G}(r_{0})u\cdot [\mathcal{G}(r_{0})\xi]\mathcal{N}(r_{0})\right).$$
\end{proof}
 
\begin{theorem}
\label{D}
The mapping $$\begin{array}{cccc}\mathcal{D}:&B^{\infty}(0,\varepsilon)&\rightarrow&\mathscr{L}(\HH^{t+1}(\Gamma),H^t(\Gamma))\\&r&\mapsto&\tau_{r}\Div_{\Gamma_{r}}\tau_{r}^{-1}\end{array}$$ is $\mathscr{C}^{\infty}$-G\^ateaux differentiable  and its first derivative  at $r_{0}$ is  defined for $\xi\in\mathscr{C}^{\infty}(\Gamma,\R^d)$ by
$$
 d\mathcal{D}[r_{0},\xi]\uu=-\Tr([\mathcal{G}(r_{0})\xi][\mathcal{G}(r_{0})\uu])+\left([\mathcal{G}(r_{0})\uu]\mathcal{N}(r_{0})\cdot[\mathcal{G}(r_{0})\xi]\mathcal{N}(r_{0})\right).
$$
\end{theorem}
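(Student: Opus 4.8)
The plan is to reduce the statement to Theorem~\ref{nabla} by exploiting that the surface divergence is the trace of the tangential Jacobian. First I would record the elementary identity $\Div_{\Gamma_{r}}\vv_{r}=\Tr\big([\nabla_{\Gamma_{r}}\vv_{r}]\big)$, valid for any $\vv_{r}\in\HH^{t+1}(\Gamma_{r})$, where $[\nabla_{\Gamma_{r}}\vv_{r}]$ is the matrix whose $i$-th column is the tangential gradient of the $i$-th component of $\vv_{r}$; this is immediate from the defining formulas of $\Div_{\Gamma_{r}}$ and $\nabla_{\Gamma_{r}}$ by writing out the diagonal entries. Since the pullback $\tau_{r}$ acts componentwise and commutes with the trace, and since the $i$-th component of $\tau_{r}^{-1}\uu$ equals $\tau_{r}^{-1}u_{i}$ (with $u_{i}$ the $i$-th component of $\uu$), this gives
$$
 \mathcal{D}(r)\uu=\tau_{r}\Div_{\Gamma_{r}}\tau_{r}^{-1}\uu=\Tr\big([\mathcal{G}(r)\uu]\big),
$$
where $[\mathcal{G}(r)\uu]$ denotes the matrix whose $i$-th column is $\mathcal{G}(r)u_{i}=\tau_{r}\nabla_{\Gamma_{r}}\tau_{r}^{-1}u_{i}$. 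This presents $\mathcal{D}$ as the composition of the componentwise application of $\mathcal{G}$ with the linear continuous trace map from $d\times d$ matrices of $H^t(\Gamma)$-functions to $H^{t}(\Gamma)$, which is also consistent with the asserted mapping properties since $\mathcal{G}(r)\in\mathscr{L}(H^{t+1}(\Gamma),\HH^{t}(\Gamma))$.

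Next, the $\mathscr{C}^{\infty}$-G\^ateaux differentiability of $\mathcal{D}$ would follow immediately from that of $\mathcal{G}$ (Theorem~\ref{nabla}) together with the chain rule for $\mathscr{C}^{m}$-G\^ateaux differentiable maps between Fr\'echet spaces, since the trace and the passage to the matrix of components are linear and continuous operations. Differentiating the identity above yields
$$
 d\mathcal{D}[r_{0},\xi]\uu=\Tr\big([d\mathcal{G}[r_{0},\xi]\uu]\big),
$$
the matrix $[d\mathcal{G}[r_{0},\xi]\uu]$ having $i$-th column $d\mathcal{G}[r_{0},\xi]u_{i}$, and it then remains only to insert the formula of Theorem~\ref{nabla},
$$
 d\mathcal{G}[r_{0},\xi]u_{i}=-[\mathcal{G}(r_{0})\xi]\mathcal{G}(r_{0})u_{i}+\big(\mathcal{G}(r_{0})u_{i}\cdot[\mathcal{G}(r_{0})\xi]\mathcal{N}(r_{0})\big)\mathcal{N}(r_{0}),
$$
and to sum the diagonal entries.

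The remaining work is elementary linear algebra, and I expect this bookkeeping---keeping track of which index labels columns and which labels rows in $[\mathcal{G}(r_{0})\xi]$ and $[\mathcal{G}(r_{0})\uu]$---to be the only delicate point. For the first contribution one uses that the $k$-th entry of $\mathcal{G}(r_{0})u_{i}$ is the $(k,i)$-entry of $[\mathcal{G}(r_{0})\uu]$ and the $i$-th entry of $\mathcal{G}(r_{0})\xi_{k}$ is the $(i,k)$-entry of $[\mathcal{G}(r_{0})\xi]$, to get $\sum_{i}\big(-[\mathcal{G}(r_{0})\xi]\mathcal{G}(r_{0})u_{i}\big)_{i}=-\Tr\big([\mathcal{G}(r_{0})\xi][\mathcal{G}(r_{0})\uu]\big)$. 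For the second contribution one pulls the sum over $i$ through the normal vector, obtaining $\sum_{i}\big(\mathcal{G}(r_{0})u_{i}\cdot[\mathcal{G}(r_{0})\xi]\mathcal{N}(r_{0})\big)\big(\mathcal{N}(r_{0})\big)_{i}=[\mathcal{G}(r_{0})\uu]\mathcal{N}(r_{0})\cdot[\mathcal{G}(r_{0})\xi]\mathcal{N}(r_{0})$. Adding the two gives the claimed expression for $d\mathcal{D}[r_{0},\xi]\uu$. Higher order derivatives would be obtained the same way, by applying $\Tr$ to the matrix of higher order derivatives of $\mathcal{G}$, which Theorem~\ref{nabla} and the remark following it furnish recursively in terms of $\mathcal{G}$ and $\mathcal{N}$.
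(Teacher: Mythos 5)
Your proposal is correct and follows essentially the same route as the paper: it rests on the identity $\mathcal{D}(r)\uu=\Tr\big([\mathcal{G}(r)\uu]\big)$ and then differentiates via Theorem~\ref{nabla} and the chain/product rules, which is exactly the paper's (much terser) argument. Your explicit index bookkeeping for the two trace contributions is a correct expansion of what the paper leaves to the reader.
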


\begin{proof}For $\uu\in \HH^{t+1}(\Gamma)$ we have $\mathcal{D}(r)\uu=\Tr([\mathcal{G}(r)\uu])$. Then we use the differentiation rules.
\end{proof}

\begin{remark}(i) Since the first derivative of $\mathcal{D}$ is composed of $\mathcal{G}$ and $\mathcal{N}$ and the first  derivative of $\mathcal{J}$ is composed of $\mathcal{J}$ and $\mathcal{D}$, we can obtain an expression of higher order derivatives of the Jacobian recursively. \\
(ii) Denoting by 
\fontdimen16\textfont2=3.5pt
$\mathcal{M}_{\Gamma_{r}}$ \fontdimen16\textfont2=2pt 
the tangential G\"unter derivative on $\Gamma_{r}$,  the formulas $(\#)$ in section \ref{GDiffPH}  can be rewritten as 
$$
\left\{\begin{array}{ccl}\mathcal{W}(r_{0})&=&J_{r_{0}}(\tau_{r_{0}}\nn_{r_{0}}),\vspace{2mm}\\\dfrac{\partial \mathcal{W}}{\partial r}[r_{0},\xi]&=&-J_{r_{0}}\Big(\tau_{r_{0}}\fontdimen16\textfont2=3.5pt\mathcal{M}_{\Gamma_{r_{0}}}\fontdimen16\textfont2=2pt(\tau_{r_{0}}^{-1}\xi)\Big),\vspace{2mm}\\\dfrac{\partial^{m} \mathcal{W}}{\partial r^{m}}[r_{0},\xi]&\equiv&0\text{ for all }m\ge d.
\end{array}\right.
$$

\end{remark}

\begin{remark}\label{Ck}
(\textbf{Electromagnetic hypersingular kernel}) Let $\kappa\in\C$ with $\Im(\kappa)\ge0$ and $d=3$. The electromagnetic hypersingular operator is defined for a tangential density $\jj\in\TT\HH^{t}(\Gamma)$ by
$$
C_{\kappa}\jj(x)=-\frac{1}{\kappa}\int_{\Gamma}\nn(x)\wedge\left(\Rot^x\Rot^x\big(G_{a}(\kappa,x-y)\,\jj(y)\big)\right)ds(y).
$$
Using the identity $\Rot\Rot=-\Delta+\nabla\Div$ we have
\[
\begin{split}
 C_{\kappa}\jj(x)&=-\nn(x)\wedge\displaystyle{\int_{\Gamma}\Big(\kappa\,G_{a}(\kappa,x-y)\cdot\Id_{\R^3}}\\
 &\qquad\quad+\dfrac{1}{\kappa}\nabla^x_{\Gamma}G_{a}(\kappa,x-y)\Div_{\Gamma}\Big)\jj(y)\,ds(y).
\end{split}
\]
This is the operator of the electric field integral equation in electromagnetism. 
The  operator $C_{\kappa}$  is a priori an operator of order $+1$ on the space of tangential vector functions $\TT\HH^{t}(\Gamma)$, but it is well known that this operator is a bounded Fredholm operator on the space of tangential vector fields of mixed regularity  $\TT\HH\sp{-\frac{1}{2}}(\Div_{\Gamma},\Gamma)$, the set of tangential vector fields whose components are in the Sobolev space $H\sp{-\frac{1}{2}}(\Gamma)$ and whose surface divergence is in $H\sp{-\frac{1}{2}}(\Gamma)$. Therefore it is desirable to study the shape differentiability of this operator defined on the shape dependent space $\TT\HH\sp{-\frac{1}{2}}(\Div_{\Gamma},\Gamma)$. For this, the tools presented above are not directly applicable. It is the purpose of the second part \cite{CostabelLeLouer} of our paper to present an alternative strategy  using the Helmholtz decomposition of the space $\TT\HH\sp{-\frac{1}{2}}(\Div_{\Gamma},\Gamma)$.
\end{remark}

 \small
%

\end{document}